\documentclass{amsart}
\usepackage{amsfonts}

\setcounter{MaxMatrixCols}{10}

\newtheorem{theorem}{Theorem}[section]

\newtheorem{corollary}[theorem]{Corollary}
\newtheorem{lemma}[theorem]{Lemma}

\newtheorem{definition}[theorem]{Definition}
\newtheorem{proposition}[theorem]{Proposition}
\newtheorem{remark}[theorem]{Remark}
\newtheorem{comments }[theorem]{Comments}
\numberwithin{theorem}{section}
\input{tcilatex}

\begin{document}
\title[Multipliers and crossed products by Hilbert pro-$C^{\ast }$-bimodules]%
{Multipliers of Hilbert pro-$C^{\ast }$-bimodules and crossed products by
Hilbert pro-$C^{\ast }$-bimodules }
\author{Maria Joi\c{t}a, Radu-B. Munteanu and Ioannis Zarakas }
\address{Maria Joi\c{t}a \\
Department of Mathematics, Faculty of Applied Sciences, University
Politehnica of Bucharest, 313 Spl. Independentei Street, 060042, Bucharest,
Romania and Simion Stoilow Institute of Mathematics of the Roumanian
Academy, 21 Calea Grivitei Street, 010702, Bucharest, Romania}
\email{joita@fmi.unibuc.ro }
\urladdr{http://sites.google.com/a/g.unibuc.ro/maria-joita/}
\address{ Radu-B. Munteanu\\
Department of Mathematics, University of Bucharest, 14 Academiei St.,
010014, Bucharest, Romania}
\email{radu-bogdan.munteanu@g.unibuc.ro}
\address{Ioannis Zarakas, Department of Mathematics, University of Athens,
Panepistimiopolis, Athens 15784, Greece}
\email{gzarak@math.uoa.gr}
\subjclass[2000]{Primary 48L08; 48L05}
\keywords{pro-$C^{\ast }$-algebras, pro-$C^{\ast }$-bimodules, multipliers,
crossed products}

\begin{abstract}
In this paper we introduce the notion of multiplier of a Hilbert pro-$%
C^{\ast }$-bimodule and we investigate the structure of the multiplier
bimodule of a Hilbert pro-$C^{\ast }$-bimodule. We also investigate the
relationship between the crossed product $A\times _{X}\mathbb{Z}$ of a pro-$%
C^{\ast }$-algebra $A$ by a Hilbert pro-$C^{\ast }$-bimodule $X$ over $A$,
the crossed product $M(A)\times _{M(X)}\mathbb{Z}$ of the multiplier algebra 
$M(A)$ of $A$ by the multiplier bimodule $M(X)$ of $X$ and the multiplier
algebra $M(A\times _{X}\mathbb{Z})$ of $A\times _{X}\mathbb{Z}$.
\end{abstract}

\maketitle

\section{Introduction}

The notion of a Hilbert $C^{\ast }$-module is a generalization of that of a
Hilbert space in which the inner product takes its values in a $C^{\ast }$%
-algebra rather than in the field of complex numbers, but the theory of
Hilbert $C^{\ast }$-modules is different from the theory of Hilbert spaces
(for example, no every Hilbert $C^{\ast }$-submodule is complemented). In
1953, Kaplansky first used Hilbert $C^{\ast }$-modules over commutative $%
C^{\ast }$-algebras to prove that derivations of type $I$ $AW^{\ast }$%
-algebras are inner. In 1973, the theory was extended independently by
Paschke and Rieffel to non-commutative $C^{\ast }$-algebras and the latter
author used it to construct the theory of \textquotedblleft induced
representations of $C^{\ast }$-algebras\textquotedblright . Moreover,
Hilbert $C^{\ast }$-modules gave the right context for the extension of the
notion of Morita equivalence to $C^{\ast }$-algebras and have played a
crucial role in Kasparov's $KK$-theory. Finally, they may be considered as a
generalization of vector bundles to non-commutative $\ast $-algebras,
therefore they play a significant role in non-commutative geometry and, in
particular, in $C^{\ast }$-algebraic quantum group theory and groupoid $%
C^{\ast }$-algebras. The extension of such a rich in results concept, to the
case of pro-$C^{\ast }$-algebras could not be disregarded.

In \cite{Z}, Zarakas introduced the notion of a Hilbert pro-$C^{\ast }$%
-bimodule over a pro-$C^{\ast }$-algebra and studied its structure. In \cite%
{J2}, Joi\c{t}a investigated the structure of the multiplier module of a
Hilbert pro-$C^{\ast }$-module. In this paper we introduce the notion of
multiplier of a Hilbert pro-$C^{\ast }$-bimodule and we investigate the
structure of the multiplier bimodule of a Hilbert pro-$C^{\ast }$-bimodule.

In \cite{JZ}, Joi\c{t}a and Zarakas extended the construction of Abadie,
Eilers and Exel \cite{AEE} in context of pro-$C^{\ast }$-algebras and
associated to a Hilbert pro-$C^{\ast }$-bimodule $(X,A)$ a pro-$C^{\ast }$%
-algebra $A\times _{X}\mathbb{Z}$, called the crossed product of $A$ by $X$.
It is natural to ask what is the relationship between the pro-$C^{\ast }$%
-algebras associated to a Hilbert pro-$C^{\ast }$-bimodule $(X,A)$ and its
multiplier bimodule $(M(X),M(A))$.

The organization of this paper is as follows. In Section 2, we recall some
notations and definitions. Section 3 is devoted to investigate multipliers
of a Hilbert pro-$C^{\ast }$-bimodule. Given a Hilbert pro-$C^{\ast }$%
-bimodule $X$, we show that the Hilbert pro-$C^{\ast }$-bimodule structure
on $X$ extends to a Hilbert pro-$C^{\ast }$-bimodule structure on the
multiplier bimodule $M(X)$ of $X$. Also we define the strict topology on $%
M(X)$ and show that $X$ can be identified with a Hilbert pro-$C^{\ast }$%
-sub-bimodule of $M(X)$ which is dense in $M(X)$ with respect to the strict
topology. We introduce the notion of morphism of Hilbert pro-$C^{\ast }$%
-bimodules, and show that a nondegenerate morphism between Hilbert pro-$%
C^{\ast }$-bimodules is continuous with respect to the strict topology and
it extends to a unique morphism between the multiplier bimodules. Finally,
as in the case of Hilbert $C^{\ast }$-bimodules \cite{R}, we show that $%
(M(X),M(A))$ can be regarded as a maximal extension of $(X,A)$. Section 4 is
devoted to investigate the relationship between the crossed product $A\times
_{X}\mathbb{Z}$ of a pro-$C^{\ast }$-algebra $A$ by a Hilbert pro-$C^{\ast }$%
-bimodule $X$ over $A$, the crossed product $M(A)\times _{M(X)}\mathbb{Z}$
of the multiplier algebra $M(A)$ of $A$ by the multiplier bimodule $M(X)$of $%
X$ and the multiplier algebra $M(A\times _{X}\mathbb{Z})$ of $A\times _{X}%
\mathbb{Z}$. We show that the crossed product associated to a full Hilbert
pro-$C^{\ast }$-bimodule $(X,A)$ can be identified with a pro-$C^{\ast }$%
-subalgebra of the crossed product associated to $(M(X),M(A))$ and the
crossed product associated to $(M(X),M(A))$ can be identified with a pro-$%
C^{\ast }$-subalgebra of the multiplier algebra of the crossed product
associated to $\left( X,A\right) $. Crossed products by Hilbert pro-$C^{\ast
}$-bimodules are generalizations of crossed products of pro-$C^{\ast }$%
-algebras by inverse limit automorphism \cite{JZ}. As an application, we
prove that given an inverse limit automorphism $\alpha $ of a nonunital pro-$%
C^{\ast }$-algebra $A$, the crossed product of $M(A)$ by $\overline{\alpha }$%
, the extension of $\alpha \ $to $M(A)$, can be identified with a pro-$%
C^{\ast }$-subalgebra of the multiplier algebra $M(A\times _{\alpha }\mathbb{%
Z})$ of $A\times _{\alpha }\mathbb{Z}$.

\section{Preliminaries}

A complete Hausdorff topological $\ast $-algebra $A$\ whose topology is
given by a directed family of $C^{\ast }$-seminorms $\{p_{\lambda };\lambda
\in \Lambda \}\ \ $is called a \textit{pro-}$C^{\ast }$\textit{-algebra}.
Other terms used in the literature for pro-$C^{\ast }$-algebras are: locally 
$C^{\ast }$-algebras (A. Inoue, M. Fragoulopoulou, A. Mallios, etc.), $%
LMC^{\ast }$-algebras (G. Lassner, K. Schm\"{u}dgen), $b^{\ast }$-algebras
(C. Apostol).

Let $A$\ be a pro-$C^{\ast }$-algebra with the topology given by $\Gamma
=\{p_{\lambda };\lambda \in \Lambda \}$ and let $B$\ be a pro-$C^{\ast }$%
-algebra with the topology given by $\Gamma ^{\prime }=\{q_{\delta };\delta
\in \Delta \}$.

\textit{An approximate unit} of $A$ is a net $\{e_{i}\}_{i\in I}$ of
positive elements in $A$ such that $p_{\lambda }\left( e_{i}\right) \leq 1$
for all $i\in I$ and for all $\lambda \in \Lambda $ and the nets $%
\{e_{i}b\}_{i\in I}\ $and $\{be_{i}\}_{i\in I}$ converge to $b$ for all $%
b\in A$.

A \textit{pro-}$C^{\ast }$\textit{-morphism} is a continuous $\ast $%
-morphism $\varphi :A\rightarrow B$ (that is, $\varphi $ is linear, $\varphi
\left( ab\right) =\varphi (a)\varphi (b)$ and $\varphi (a^{\ast })=\varphi
(a)^{\ast }$ for all $a,b\in A$ and for each $q_{\delta }\in \Gamma ^{\prime
}$, there is $p_{\lambda }\in \Gamma $ such that $q_{\delta }\left( \varphi
(a)\right) \leq p_{\lambda }\left( a\right) $ for all $a\in A$). An
invertible pro-$C^{\ast }$-morphism $\varphi :A\rightarrow B$ is a pro-$%
C^{\ast }$-isomorphism if $\varphi ^{-1}$ is also pro-$C^{\ast }$-morphism.

If $\{A_{\lambda };\pi _{\lambda \mu }\}_{\lambda \geq \mu ,\lambda ,\mu \in
\Lambda }$ is an inverse system of $C^{\ast }$-algebras, then $%
\lim\limits_{\leftarrow \lambda }A_{\lambda }$ with the topology given by
the family of $C^{\ast }$-seminorms $\{p_{\lambda }\}_{\lambda \in \Lambda
}, $ with $p_{\lambda }\left( \left( a_{\mu }\right) _{\mu \in \Lambda
}\right) =\left\Vert a_{\lambda }\right\Vert _{A_{\lambda }}$ for all $%
\lambda \in \Lambda $, is a pro-$C^{\ast }$-algebra.

Let $A$\ be a pro-$C^{\ast }$-algebra with the topology given by $\Gamma
=\{p_{\lambda };\lambda \in \Lambda \}$. For $\lambda \in \Lambda $,\ $\ker
p_{\lambda }$\ is a closed $\ast $-bilateral ideal and $A_{\lambda }=A/\ker
p_{\lambda }$\ is a $C^{\ast }$-algebra in the $C^{\ast }$-norm $\left\Vert
\cdot \right\Vert _{p_{\lambda }}$\ induced by $p_{\lambda }$\ (that is, $%
\left\Vert a+\ker p_{{\small \lambda }}\right\Vert _{p_{\lambda }}=$\ $p_{%
{\small \lambda }}(a),$ for all $a\in A$). The canonical map from $A$ to $%
A_{\lambda }$ is denoted by $\pi _{\lambda }^{A},$ $\pi _{\lambda
}^{A}\left( a\right) =a+\ker p_{\lambda }$ for all $a\in A$. For $\lambda
,\mu \in \Lambda $\ with $\mu \leq \lambda $\ there is a surjective $C^{\ast
}$-morphism $\pi _{\lambda \mu }^{A}:A_{\lambda }\rightarrow A_{\mu }$\ such
that $\pi _{\lambda \mu }^{A}\left( a+\ker {\small p}_{\lambda }\right)
=a+\ker p_{\mu }$, and then $\{A_{\lambda };\pi _{\lambda \mu
}^{A}\}_{\lambda ,\mu \in \Lambda }$\ is an inverse system of $C^{\ast }$%
-algebras. Moreover, the pro-$C^{\ast }$-algebras$\ A$ and $%
\lim\limits_{\leftarrow \lambda }A_{\lambda }$ are isomorphic (Arens-Michael
decomposition). For more details we refer the reader to \cite{F,M,P}.

Here we recall some basic facts from \cite{J1} and \cite{Z} regarding
Hilbert pro-$C^{\ast }$-modules and Hilbert pro-$C^{\ast }$-bimodules
respectively.

Let $A$ be a pro-$C^{\ast }$-algebra whose topology is given by the family
of $C^{\ast }$-seminorms $\Gamma =\{p_{\lambda };\lambda \in \Lambda \}$.

\textit{A} \textit{right Hilbert pro-}$C^{\ast }$\textit{-module over }$A$%
\textit{\ }(or just \textit{Hilbert }$A$\textit{-module}), is a linear space 
$X$ that is also a right $A$-module equipped with a right $A$-valued inner
product $\left\langle \cdot ,\cdot \right\rangle _{A}$, that is $\mathbb{C}$%
- and $A$-linear in the second variable and conjugate linear in the first
variable, with the following properties:

\begin{enumerate}
\item $\left\langle x,x\right\rangle _{A}\geq 0$ and $\left\langle
x,x\right\rangle _{A}=0$ if and only if $x=0;$

\item $\left( \left\langle x,y\right\rangle _{A}\right) ^{\ast
}=\left\langle y,x\right\rangle _{A}$
\end{enumerate}

and which is complete with respect to the topology given by the family of
seminorms $\{p_{\lambda }^{A}\}_{\lambda \in \Lambda },$ with $p_{\lambda
}^{A}\left( x\right) =p_{\lambda }\left( \left\langle x,x\right\rangle
_{A}\right) ^{\frac{1}{2}},x\in X$. A Hilbert $A$-module $X$ is full if the
pro-$C^{\ast }$- subalgebra of $A$ generated by $\{\left\langle
x,y\right\rangle _{A};x,y\in X\}$ coincides with $A$.

\textit{A} \textit{left Hilbert pro-}$C^{\ast }$\textit{-module }$X$\ over\
a\ pro-$C^{\ast }$-algebra $A$ is defined in the same way, where for
instance the completeness is requested with respect to the family of
seminorms $\{^{A}p_{\lambda }\}_{\lambda \in \Lambda }$, where $%
^{A}p_{\lambda }\left( x\right) =p_{\lambda }\left( _{A}\left\langle
x,x\right\rangle \right) ^{\frac{1}{2}},x\in X$.

In case $X$ is a left Hilbert \text{pro-}$C^{\ast }$\text{-}module over $%
(A,\{p_{\lambda }\}_{\lambda \in \Lambda })$ and a right Hilbert \text{pro-}$%
C^{\ast }$\text{-}module over $(B,\{q_{\lambda }\}_{\lambda \in \Lambda })$,
such that the following relations hold:

\begin{itemize}
\item $_{A}\left\langle x,y\right\rangle z=x\left\langle y,z\right\rangle
_{B}$ for all $x,y,z\in X$ ,

\item $q_{\lambda }^{B}(ax)$ $\leq p_{\lambda }(a)q_{\lambda }^{B}\left(
x\right) $ and $^{A}p_{\lambda }(xb)$ $\leq q_{\lambda }(b)^{A}p_{\lambda
}\left( x\right) $ for all $x\in X,\,a\in A,\,b\in B$ and for all $\lambda
\in \Lambda $,
\end{itemize}

then we say that $X$ is \textit{a Hilbert }$A-B$\textit{\ pro-}$C^{\ast }$%
\textit{-bimodule}.

A Hilbert $A-B$ pro-$C^{\ast }$\text{-}bimodule $X$ is \textit{full} if it
is full as a right and as a left Hilbert pro-$C^{*}$-module.

Let $\Lambda $ be an upward directed set and $\{A_{\lambda };B_{\lambda
};X_{\lambda };\pi _{\lambda \mu };\chi _{\lambda \mu };\sigma _{\lambda \mu
};\lambda ,\mu \in \Lambda ,\lambda \geq \mu \}$ an inverse system of
Hilbert $C^{\ast }$-bimodules, that is:

\begin{itemize}
\item $\{A_{\lambda };\pi _{\lambda \mu };\lambda ,\mu \in \Lambda ,\lambda
\geq \mu \}$ and $\{B_{\lambda };\chi _{\lambda \mu };\lambda ,\mu \in
\Lambda ,\lambda \geq \mu \}$ are inverse systems of $C^{\ast }$-algebras;

\item $\{X_{\lambda };\sigma _{\lambda \mu };\lambda ,\mu \in \Lambda
,\lambda \geq \mu \}$ is an inverse system of Banach spaces;

\item for each $\lambda \in \Lambda ,$ $X_{\lambda }$ is a Hilbert $%
A_{\lambda }-B_{\lambda }$ $C^{\ast }$-bimodule;

\item $\left\langle \sigma _{\lambda \mu }\left( x\right) ,\sigma _{\lambda
\mu }\left( y\right) \right\rangle _{B_{\mu }}=\chi _{\lambda \mu }\left(
\left\langle x,y\right\rangle _{B_{\lambda }}\right) $ and $_{A_{\mu
}}\left\langle \sigma _{\lambda \mu }\left( x\right) ,\sigma _{\lambda \mu
}\left( y\right) \right\rangle =\pi _{\lambda \mu }\left( _{A_{\lambda
}}\left\langle x,y\right\rangle \right) $ for all $x,y\in X_{\lambda }$ and
for all $\lambda ,\mu \in \Lambda $ with $\lambda \geq \mu .$

\item $\sigma_{\lambda\mu}(x)\chi_{\lambda\mu}(b)=\sigma_{\lambda\mu}(xb),\,%
\,\pi_{\lambda\mu}(a)\sigma_{\lambda\mu}(x)=\sigma_{\lambda\mu}(ax)$ for all 
$x\in X_{\lambda},\,a\in\,A_{\lambda},\,b\in\,B_{\lambda}$ and for all $%
\lambda,\mu\in\Lambda$ such that $\lambda\geq\mu.$
\end{itemize}

Let $A=${{$\lim\limits_{\leftarrow \lambda }A_{\lambda }$, }}$B=$$%
\lim\limits_{\leftarrow \lambda }B_{\lambda }${\ and }$X=${{$%
\lim\limits_{\leftarrow \lambda }X_{\lambda }$. Then }}$X$ has a structure
of a Hilbert $A-B$ pro-$C^{\ast }$-bimodule with

\begin{center}
$\left( x_{\lambda }\right) _{\lambda \in \Lambda }\left( b_{\lambda
}\right) _{\lambda \in \Lambda }=\left( x_{\lambda }b_{\lambda }\right)
_{\lambda \in \Lambda }$ and $\left\langle \left( x_{\lambda }\right)
_{\lambda \in \Lambda },\left( y_{\lambda }\right) _{\lambda \in \Lambda
}\right\rangle _{B}$ $=\left( \left\langle x_{\lambda },y_{\lambda
}\right\rangle _{B_{\lambda }}\right) _{\lambda \in \Lambda }$

and

$\left( a_{\lambda }\right) _{\lambda \in \Lambda }\left( x_{\lambda
}\right) _{\lambda \in \Lambda }=\left( a_{\lambda }x_{\lambda }\right)
_{\lambda \in \Lambda }$ and $_{A}\left\langle \left( x_{\lambda }\right)
_{\lambda \in \Lambda },\left( y_{\lambda }\right) _{\lambda \in \Lambda
}\right\rangle =\left( _{A_{\lambda }}\left\langle x_{\lambda },y_{\lambda
}\right\rangle \right) _{\lambda \in \Lambda }$.
\end{center}

Let $X$ be a Hilbert $A-B$ pro-$C^{\ast }$-bimodule. Then, for each $\lambda
\in \Lambda ,$ $^{A}p_{\lambda }\left( x\right) =q_{\lambda }^{B}\left(
x\right) $ for all $x\in X$, and the normed space $X_{\lambda }=X/N_{\lambda
}^{B}$, where $N_{\lambda }^{B}=\{x\in X;q_{\lambda }^{B}\left( x\right)
=0\} $, is complete in the norm $||x+N_{\lambda }^{B}||_{X_{\lambda }}$ $%
=q_{\lambda }^{B}(x),x\in X$. Moreover, $X_{\lambda }$ has a canonical
structure of a Hilbert $A_{\lambda }-$ $B_{\lambda }$ $C^{\ast }$-bimodule
with $\left\langle x+N_{\lambda }^{B},y+N_{\lambda }^{B}\right\rangle
_{B_{\lambda }}$ $=\left\langle x,y\right\rangle _{B}+\ker q_{\lambda }$ and 
$_{A_{\lambda }}\left\langle x+N_{\lambda }^{B},y+N_{\lambda
}^{B}\right\rangle =_{A}\left\langle x,y\right\rangle +\ker p_{\lambda }$
for all $x,y\in X$. The canonical surjection from $X$ to $X_{\lambda }$ is
denoted by $\sigma _{\lambda }^{X}$. For $\lambda ,\mu \in \Lambda $ with $%
\lambda \geq \mu $, there is a canonical surjective linear map $\sigma
_{\lambda \mu }^{X}:X_{\lambda }\rightarrow X_{\mu }$ such that $\sigma
_{\lambda \mu }^{X}\left( x+N_{\lambda }^{B}\right) =x+N_{\mu }^{B}$ for all 
$x\in X$. Then $\{A_{\lambda };B_{\lambda };X_{\lambda };\pi _{\lambda \mu
}^{A};\sigma _{\lambda \mu }^{X};\pi _{\lambda \mu }^{B}\lambda ,\mu \in
\Lambda ,\lambda \geq \mu \}$ is an inverse system of Hilbert $C^{\ast }$%
-bimodules in the above sense.

Let $X$ and $Y$ be Hilbert pro-$C^{\ast }$-modules over $B$. A morphism $%
T:X\rightarrow Y$ of right modules is \textit{adjointable} if there is
another morphism of modules $T^{\ast }:Y\rightarrow X\ $such that $%
\left\langle Tx,y\right\rangle _{B}=\left\langle x,T^{\ast }y\right\rangle
_{B}$ for all $x\in X,y\in Y$. The vector space $L_{B}(X,Y)\ $of all
adjointable module morphisms from $X$ to $Y$ has a structure of locally
convex space under the topology given by the family of seminorms $%
\{q_{\lambda ,L_{B}(X,Y)}\}_{\lambda \in \Lambda }$, where $q_{\lambda
,L_{B}(X,Y)}\left( T\right) =\sup \{q_{\lambda }^{B}(Tx);x\in X,q_{\lambda
}^{B}\left( x\right) \leq 1\}$. Moreover, $\{L_{B_{\lambda }}(X_{\lambda
},Y_{\lambda });\chi _{\lambda \mu }^{L_{B}(X,Y)}$ $\lambda ,\mu \in \Lambda
,\lambda \geq \mu \}$ where $\chi _{\lambda \mu }^{L_{B}(X,Y)}:L_{B_{\lambda
}}(X_{\lambda },Y_{\lambda })\rightarrow L_{B_{\mu }}(X_{\mu },Y_{\mu })$ is
given by $\chi _{\lambda \mu }^{L_{B}(X,Y)}\left( T\right) \left( \sigma
_{\mu }^{X}\left( x\right) \right) =\sigma _{\lambda \mu }^{Y}\left(
T(\sigma _{\lambda }^{X}\left( x\right) )\right) $, is an inverse system of
Banach spaces and $L_{B}(X,Y)=${{$\lim\limits_{\leftarrow \lambda }$}} $%
L_{B_{\lambda }}(X_{\lambda },Y_{\lambda })$ up to an isomorphism of locally
convex spaces. The canonical projections $\chi _{\lambda }^{L_{B}(X,Y)}:$ $%
L_{B}(X,Y)\rightarrow $ $L_{B_{\lambda }}(X_{\lambda },Y_{\lambda }),$ $%
\lambda \in \Lambda $ are given by $\chi _{\lambda }^{L_{B}(X,Y)}\left(
T\right) \left( \sigma _{\lambda }^{X}\left( x\right) \right) =\sigma
_{\lambda }^{Y}\left( T(x)\right) $ for all $x\in X$. For $x\in X$ and $y\in
Y$, the map $\theta _{y,x}:X\rightarrow Y$ given by $\theta _{y,x}\left(
z\right) =y\left\langle x,z\right\rangle _{B}$ is an adjointable module
morphism and the closed subspace of $L_{B}(X,Y)$ generated by $\{\theta
_{y,x};x\in X$ and $y\in Y\}$ is denoted by $K_{B}(X,Y)$, whose elements are
usually called \textit{compact operators}. For $Y=X,$ $L_{B}(X)=L_{B}(X,X)$
is a pro-$C^{\ast }$-algebra with $\left( L_{B}(X)\right) _{\lambda
}=L_{B_{\lambda }}(X_{\lambda })$ for each $\lambda \in \Lambda $, and $%
K_{B}(X)=K_{B}(X,X)$ is a closed two-sided $\ast $-ideal of $L_{B}(X)$ with $%
\left( K_{B}(X)\right) _{\lambda }=K_{B_{\lambda }}(X_{\lambda })$ for each $%
\lambda \in \Lambda $.

A pro-$C^{\ast }$-algebra $A$ has a natural structure of Hilbert pro-$%
C^{\ast }$-module, and the multiplier algebra $M(A)$ has a structure of pro-$%
C^{\ast }$-algebra which is isomorphic to $L_{A}(A)$ \cite{P}. Moreover, pro-%
$C^{\ast }$-algebras $A$ and $K_{A}\left( A\right) $ are isomorphic and $A$
is a closed bilateral ideal of $M(A)$ which is dense in $M(A)$ with respect
to the strict topology. The strict topology on $M(A)$ is given by the family
of seminorms $\{p_{\left( \lambda ,a\right) }\}_{\left( \lambda ,a\right)
\in \Lambda \times A},$ where $p_{\left( \lambda ,a\right) }\left( b\right)
=p_{\lambda }\left( ab\right) +p_{\lambda }\left( ba\right) $ for all $b\in
M(A)$.

A pro-$C^{\ast }$-morphism $\varphi :A\rightarrow M(B)$\ is nondegenerate if 
$[\varphi \left( A\right) B]=B$, where $[\varphi \left( A\right) B]$ denotes
the closed subspace of $B$ generated by $\{\varphi \left( a\right) b;a\in
A,b\in B\}$. A nondegenerate pro-$C^{\ast }$-morphism $\varphi :A\rightarrow
M(B)$ extends to a unique pro-$C^{\ast }$-morphism $\overline{\varphi }%
:M(A)\rightarrow M(B)$ which is strictly continuous on bounded sets.

Throughout this paper, $A\ $and $B$ are two pro-$C^{\ast }$-algebras whose
topologies are given by the families of $C^{\ast }$-seminorms $\Gamma
=\{p_{\lambda };\lambda \in \Lambda \}$, respectively $\Gamma ^{\prime
}=\{q_{\delta };\delta \in \Delta \}.$

\section{Multipliers of Hilbert pro-$C^{\ast }$-bimodules}

Let $X$ and $Y$ be two Hilbert pro-$C^{\ast }$-modules over $A$.

\begin{proposition}
\label{bimodule}The vector space $L_{A}(X,Y)$ of all adjointable module maps
from $X$ to $Y$ has a natural structure of Hilbert $L_{A}(Y)-L_{A}(X)$ pro-$%
C^{\ast }$-bimodule with the bimodule structure given by 
\begin{equation*}
S\cdot T=S\circ T\text{ and }T\cdot R=T\circ R\ 
\end{equation*}%
for\ all\ $T\in L_{A}(X,Y),S\in L_{A}(Y)\ $and\ $R\in L_{A}(X)\ $and the
inner products given by 
\begin{equation*}
_{L_{A}(Y)}\left\langle T_{1},T_{2}\right\rangle =T_{1}\circ T_{2}^{\ast }%
\text{ and }\left\langle T_{1},T_{2}\right\rangle _{L_{A}(X)}=T_{1}^{\ast
}\circ T_{2}
\end{equation*}%
for all $T_{1},T_{2}\in L_{A}(X,Y)$.
\end{proposition}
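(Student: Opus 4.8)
The plan is to verify directly the defining axioms of a Hilbert $L_{A}(Y)-L_{A}(X)$ pro-$C^{\ast }$-bimodule listed in Section 2, observing that almost all of them reduce to elementary properties of adjointable operators: $(S\circ T)^{\ast }=T^{\ast }\circ S^{\ast }$, the conjugate linearity of $T\mapsto T^{\ast }$, the submultiplicativity of the operator seminorms under composition, and associativity of composition. Since $S\circ T\in L_{A}(X,Y)$ whenever $S\in L_{A}(Y)$ and $T\in L_{A}(X,Y)$, and $T\circ R\in L_{A}(X,Y)$ whenever $R\in L_{A}(X)$, both module actions are well defined, and their module axioms are immediate from associativity of composition. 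Likewise $T_{1}\circ T_{2}^{\ast }\in L_{A}(Y)$ and $T_{1}^{\ast }\circ T_{2}\in L_{A}(X)$, so the two inner products take values in the correct pro-$C^{\ast }$-algebras.

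First I would check the algebraic axioms for the right inner product $\left\langle T_{1},T_{2}\right\rangle _{L_{A}(X)}=T_{1}^{\ast }\circ T_{2}$: it is conjugate linear in the first and linear in the second variable, it satisfies $\left( T_{1}^{\ast }\circ T_{2}\right) ^{\ast }=T_{2}^{\ast }\circ T_{1}$, whence property (2), and $\left\langle T_{1},T_{2}\cdot R\right\rangle _{L_{A}(X)}=T_{1}^{\ast }\circ T_{2}\circ R=\left\langle T_{1},T_{2}\right\rangle _{L_{A}(X)}\cdot R$. For positivity, $\left\langle T,T\right\rangle _{L_{A}(X)}=T^{\ast }\circ T\geq 0$ in the pro-$C^{\ast }$-algebra $L_{A}(X)$, and $T^{\ast }\circ T=0$ forces $T=0$ because $p_{\lambda ,L_{A}(X)}(T^{\ast }\circ T)=p_{\lambda ,L_{A}(X,Y)}(T)^{2}$ for every $\lambda $ (established below). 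The same computations, with the role of $\ast $ reversed, give the axioms for the left inner product $_{L_{A}(Y)}\left\langle T_{1},T_{2}\right\rangle =T_{1}\circ T_{2}^{\ast }$. The compatibility relation, which in the general definition reads $_{A}\left\langle x,y\right\rangle z=x\left\langle y,z\right\rangle _{B}$, here becomes $\left( T_{1}\circ T_{2}^{\ast }\right) \circ T_{3}=T_{1}\circ \left( T_{2}^{\ast }\circ T_{3}\right) $, which is nothing but associativity of composition.

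The analytic content is concentrated in the identity $p_{\lambda ,L_{A}(X)}(T^{\ast }\circ T)=p_{\lambda ,L_{A}(X,Y)}(T)^{2}=p_{\lambda ,L_{A}(Y)}(T\circ T^{\ast })$ for each $\lambda \in \Lambda $. I would prove the first equality via the Cauchy--Schwarz inequality for Hilbert pro-$C^{\ast }$-modules: for $x\in X$ with $p_{\lambda }^{A}(x)\leq 1$ one has $p_{\lambda }^{A}(Tx)^{2}=p_{\lambda }(\left\langle x,T^{\ast }Tx\right\rangle _{A})\leq p_{\lambda ,L_{A}(X)}(T^{\ast }\circ T)$, so that $p_{\lambda ,L_{A}(X,Y)}(T)^{2}\leq p_{\lambda ,L_{A}(X)}(T^{\ast }\circ T)$, while the reverse inequality follows from submultiplicativity together with $p_{\lambda ,L_{A}(X,Y)}(T^{\ast })=p_{\lambda ,L_{A}(X,Y)}(T)$. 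This shows that the seminorm attached to the right inner product, the seminorm attached to the left inner product, and the operator seminorm on $L_{A}(X,Y)$ all coincide. Consequently the two module topologies agree with the locally convex topology of $L_{A}(X,Y)$, under which $L_{A}(X,Y)=\lim\limits_{\leftarrow \lambda }L_{A_{\lambda }}(X_{\lambda },Y_{\lambda })$ is already complete, so the completeness requirement holds. Finally, the seminorm inequalities $p_{\lambda ,L_{A}(X)}(S\circ T)\leq p_{\lambda ,L_{A}(Y)}(S)\,p_{\lambda ,L_{A}(X)}(T)$ and $^{L_{A}(Y)}p_{\lambda }(T\circ R)\leq p_{\lambda ,L_{A}(X)}(R)\,{}^{L_{A}(Y)}p_{\lambda }(T)$ are precisely the submultiplicativity of the operator seminorms under composition.

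The main obstacle, such as it is, lies not in any single axiom but in keeping the several families of seminorms aligned: one must check that the $C^{\ast }$-seminorms of the pro-$C^{\ast }$-algebras $L_{A}(X)$ and $L_{A}(Y)$, the module seminorms coming from the two inner products, and the operator seminorms on $L_{A}(X,Y)$ are mutually compatible, which is exactly what the displayed $C^{\ast }$-type identity guarantees. An alternative, and perhaps more transparent, route would be to descend to each level $\lambda $, use that $L_{A_{\lambda }}(X_{\lambda },Y_{\lambda })$ is a Hilbert $L_{A_{\lambda }}(Y_{\lambda })-L_{A_{\lambda }}(X_{\lambda })$ $C^{\ast }$-bimodule in the classical sense, verify that the connecting maps $\chi _{\lambda \mu }^{L_{A}(X,Y)}$, $\pi _{\lambda \mu }^{L_{A}(Y)}$ and $\pi _{\lambda \mu }^{L_{A}(X)}$ intertwine composition and adjoints, and then invoke the inverse-limit construction of Hilbert pro-$C^{\ast }$-bimodules recalled in Section 2.
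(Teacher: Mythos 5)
Your proposal is correct and follows essentially the same route as the paper: the algebraic axioms are dispatched as routine consequences of composition and adjoints, and the whole weight of the argument rests on the identity $p_{\lambda ,L_{A}(X)}(T^{\ast }\circ T)=p_{\lambda ,L_{A}(X,Y)}(T)^{2}=p_{\lambda ,L_{A}(Y)}(T\circ T^{\ast })$, which makes the two module seminorms coincide with the operator seminorm and hence yields completeness and the bimodule compatibility. The only (harmless) difference is that you prove this identity directly by Cauchy--Schwarz at the pro-$C^{\ast }$ level, while the paper descends to the quotients $\chi _{\lambda }^{L_{A}(X,Y)}(T)$ and invokes the $C^{\ast }$-level identity from the proof of Proposition 1.10 of \cite{EKQR} --- precisely the levelwise alternative you mention at the end.
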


\begin{proof}
It is a simple calculation to verify that $L_{A}(X,Y)$ has a structure of
pre-right Hilbert $L_{A}(X)$-pro-$C^{\ast }$-module with 
\begin{equation*}
T\cdot R=T\circ R\ \text{and\ }\left\langle T_{1},T_{2}\right\rangle
_{L_{A}(X)}=T_{1}^{\ast }\circ T_{2}\ 
\end{equation*}%
and $L_{A}(X,Y)$ has a structure of pre-left Hilbert $L_{A}(Y)$-pro-$C^{\ast
}$-module with 
\begin{equation*}
S\cdot T=S\circ T\ \text{and}_{\ L_{A}(Y)}\left\langle
T_{1},T_{2}\right\rangle =T_{1}\circ T_{2}^{\ast }.
\end{equation*}%
Moreover,%
\begin{eqnarray*}
p_{\lambda }^{L_{A}(X)}\left( T\right) ^{2} &=&p_{\lambda ,L_{A}(X)}\left(
\left\langle T,T\right\rangle _{L_{A}(X)}\right) =p_{\lambda
,L_{A}(X)}\left( T^{\ast }\circ T\right) \\
&=&\left\Vert \chi _{_{\lambda }}^{L_{A}(X,Y)}\left( T\right) ^{\ast }\chi
_{_{\lambda }}^{L_{A}(X,Y)}\left( T\right) \right\Vert _{L_{A_{\lambda
}}(X_{\lambda })} \\
&&\text{(see, for example, the proof of Proposition 1.10 \cite{EKQR})} \\
&=&\left\Vert \chi _{_{\lambda }}^{L_{A}(X,Y)}\left( T\right) \right\Vert
_{L_{A_{\lambda }}(X_{\lambda },Y_{\lambda })}^{2}=p_{\lambda
.L_{A}(X,Y)}\left( T\right) ^{2}
\end{eqnarray*}%
and%
\begin{eqnarray*}
^{L_{A}(Y)}p_{\lambda }\left( T\right) ^{2} &=&p_{\lambda ,L_{A}(Y)}\left(
_{L_{A}(Y)}\left\langle T,T\right\rangle \right) =p_{\lambda
,L_{A}(Y)}\left( T\circ T^{\ast }\right) \\
&=&\left\Vert \chi _{_{\lambda }}^{L_{A}(X,Y)}\left( T\right) \chi
_{_{\lambda }}^{L_{A}(X,Y)}\left( T\right) ^{\ast }\right\Vert
_{L_{A_{\lambda }}(Y_{\lambda })} \\
&=&\left\Vert \chi _{_{\lambda }}^{L_{A}(X,Y)}\left( T\right) ^{\ast
}\right\Vert _{L_{A_{\lambda }}(Y_{\lambda },X_{\lambda })}^{2} \\
&&\text{(see, for example, the proof of Proposition 1.10 \cite{EKQR})} \\
&=&\left\Vert \chi _{_{\lambda }}^{L_{A}(X,Y)}\left( T\right) \right\Vert
_{L_{A_{\lambda }}(X_{\lambda },Y_{\lambda })}^{2}=p_{\lambda
.L_{A}(X,Y)}\left( T\right) ^{2}
\end{eqnarray*}%
for all $T\in L_{A}(X,Y)\ $and for all $\lambda \in \Lambda $. Therefore, $%
L_{A}(X,Y)$ is a left Hilbert $L_{A}(Y)$-module and a right Hilbert $%
L_{A}(X) $-module.

Also it is easy to check that $_{L_{A}(Y)}\left\langle
T_{1},T_{2}\right\rangle \cdot T_{3}=T_{1}\cdot \left\langle
T_{2},T_{3}\right\rangle _{L_{A}(X)}$ for all $T_{1},T_{2},T_{3}\in
L_{A}(X,Y)$, and since $p_{\lambda }^{L_{A}(X)}\left( T\right) =$ $%
^{L_{A}(Y)}p_{\lambda }\left( T\right) =p_{\lambda .L_{A}(X,Y)}\left(
T\right) $ for all $T$ $\in L_{A}(X,Y)\ $and for all $\lambda \in \Lambda $, 
$L_{A}(X,Y)$ has a structure of Hilbert $L_{A}(Y)-L_{A}(X)$ pro-$C^{\ast }$%
-bimodule.
\end{proof}

\begin{remark}
Suppose that $\left( X,A\right) $ is a full\textbf{\ }Hilbert pro-$C^{\ast }$%
-bimodule. Then there is a pro-$C^{\ast }$-isomorphism $\Phi
_{A}:A\rightarrow K_{A}(X)$ given by $\Phi _{A}\left( a\right) \left(
x\right) =a\cdot x$ which extends to a pro-$C^{\ast }$-isomorphism $%
\overline{\Phi _{A}}:M(A)\rightarrow L_{A}(X)$. Moreover, $p_{\lambda
,L_{A}(X)}\left( \Phi _{A}\left( a\right) \right) =p_{\lambda }\left(
a\right) \ $for\ all $a\in A$ and $\lambda \in \Lambda $. Identifying $M(A)$
with $L_{A}(A)$ and using Proposition \ref{bimodule} and \cite[Proposition
2.5]{R}, we obtain a natural structure of Hilbert $M(A)-M(A)$ pro--$C^{\ast
} $-bimodule on $L_{A}(A,X)$ with 
\begin{equation*}
m\cdot T=\overline{\Phi _{A}}\left( m\right) \circ T\text{ and }%
_{M(A)}\left\langle T_{1},T_{2}\right\rangle =\overline{\Phi _{A}^{-1}}%
\left( T_{1}\circ T_{2}^{\ast }\right)
\end{equation*}%
and 
\begin{equation*}
T\cdot m=T\circ m\text{ and }\left\langle T_{1},T_{2}\right\rangle
_{M(A)}=T_{1}^{\ast }\circ T_{2}
\end{equation*}%
for all $T,T_{1},T_{2}\in L_{A}(A,X)$ and $m\in M(A)$.
\end{remark}

\begin{definition}
Let $\left( X,A\right) $ be a full Hilbert pro-$C^{\ast }$-bimodule. The
Hilbert $M(A)-M(A)$ pro-$C^{\ast }$-bimodule $L_{A}(A,X)$ is called the
multiplier bimodule of $X$ and it is denoted by $M(X)$.
\end{definition}

The following definition is a generalization of \cite[Definition\ 1.25]{EKQR}%
.

\begin{definition}
The strict topology on $M(X)$ is given by the family of seminorms $%
\{p_{\left( \lambda ,a\right) }\}_{\left( \lambda ,a\right) \in \Lambda
\times A}$, where $p_{\left( \lambda ,a\right) }\left( T\right) =p_{\lambda
}^{M(A)}\left( T\cdot a\right) +\ p_{\lambda }^{M(A)}\left( a\cdot T\right)
\ $ for all $T\in M(X)$ and $a\in A$.
\end{definition}

\begin{remark}
Let $\{T_{n}\}_{n}$ be a sequence in $M(X).$

\begin{enumerate}
\item If $\{T_{n}\}_{n}$ is strictly convergent, then it is bounded. Indeed,
if $\{T_{n}\}_{n}$ converges strictly to $T$, then for each $\lambda \in
\Lambda $, since%
\begin{eqnarray*}
\left\Vert \chi _{\lambda }^{M\left( X\right) }\left( T_{n}\right) \pi
_{\lambda }^{A}\left( a\right) -\chi _{\lambda }^{M\left( X\right) }\left(
T\right) \pi _{\lambda }^{A}\left( a\right) \right\Vert _{X_{\lambda }}
&=&p_{\lambda }^{A}\left( T_{n}\left( a\right) -T\left( a\right) \right) \\
&=&p_{\lambda }^{M(A)}\left( T_{n}\cdot a-T\cdot a\right) ,
\end{eqnarray*}%
the sequence $\{\chi _{\lambda }^{M(X)}\left( T_{n}\right) \pi _{\lambda
}^{A}\left( a\right) \}_{n}$ converges to $\chi _{\lambda }^{M(X)}\left(
T\right) \pi _{\lambda }^{A}\left( a\right) $ for all $a\in A$ and by the
Banach-Steinhaus theorem there is $M_{\lambda }>0$ such that 
\begin{equation*}
p_{\lambda }^{M(A)}\left( T_{n}\right) =p_{\lambda ,L_{A}(A,X)}\left(
T_{n}\right) =\left\Vert \chi _{\lambda }^{M(X)}\left( T_{n}\right)
\right\Vert _{L_{A_{\lambda }}(A_{\lambda },X_{\lambda })}\leq M_{\lambda }.
\end{equation*}

\item If $\{T_{n}\}_{n}$ converges strictly to $0$, then the sequences $%
\{\left\langle T_{n},T_{n}\right\rangle _{M(A)}\}_{n}$ and $%
\{_{M(A)}\left\langle T_{n},T_{n}\right\rangle \}_{n}$ are strictly
convergent to $0$ in $M(A)$.
\end{enumerate}
\end{remark}

Suppose that $X$ is a Hilbert pro-$C^{\ast }$-module over $A$. In \cite[%
Definition 3.2]{J2}, the strict topology on $L_{A}(A,X)$ is given by the
family of seminorms $\{p_{\left( \lambda ,a,x\right) }\}_{\left( \lambda
,a,x\right) \in \Lambda \times A\times X}$, where $p_{\left( \lambda
,a,x\right) }(T)=p_{\lambda }^{A}\left( T(a)\right) +p_{\lambda }\left(
T^{\ast }(x)\right) $. We will show that this definition coincides with the
above definition of the strict topology on $M(X)$ on bounded subsets when $%
X\ $ is a full Hilbert $A-A$ pro-$C^{\ast }$-bimodule. To show this, we will
use the following result.

\begin{lemma}
\label{factor}Let $X$ be a Hilbert pro-$C^{\ast }$-module over $A$. For each 
$x$ in $X$ there is a unique element $y$ in $X$ such that $x=y\left\langle
y,y\right\rangle _{A}.$
\end{lemma}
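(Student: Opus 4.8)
The plan is to reduce the statement to the corresponding fact for Hilbert $C^{\ast}$-modules over $C^{\ast}$-algebras and then glue the level-wise solutions through the inverse limit. Writing $A=\lim\limits_{\leftarrow \lambda}A_{\lambda}$ and $X=\lim\limits_{\leftarrow \lambda}X_{\lambda}$ as in the preliminaries, each $X_{\lambda}$ is a Hilbert $C^{\ast}$-module over $A_{\lambda}$, and the canonical maps $\sigma_{\lambda}^{X}$, $\sigma_{\lambda\mu}^{X}$ intertwine the module action and the inner products with $\pi_{\lambda}^{A}$, $\pi_{\lambda\mu}^{A}$. I would fix $x\in X$ and, for each $\lambda$, produce a \emph{unique} $y_{\lambda}\in X_{\lambda}$ with $\sigma_{\lambda}^{X}(x)=y_{\lambda}\langle y_{\lambda},y_{\lambda}\rangle_{A_{\lambda}}$, and then verify these are compatible under the connecting maps.

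For existence at a fixed level, put
\[
y_{\lambda}=\lim_{\varepsilon\downarrow 0}\sigma_{\lambda}^{X}(x)\,\bigl(a_{\lambda}+\varepsilon\bigr)^{-1/3},\qquad a_{\lambda}=\langle \sigma_{\lambda}^{X}(x),\sigma_{\lambda}^{X}(x)\rangle_{A_{\lambda}}\ge 0,
\]
where the elements are formed through the continuous functional calculus of $a_{\lambda}$ (passing to the unitization of $A_{\lambda}$ when necessary). The identity $\left\Vert z\,f(a_{\lambda})\right\Vert_{X_{\lambda}}^{2}=\left\Vert f(a_{\lambda})^{\ast}a_{\lambda}f(a_{\lambda})\right\Vert_{A_{\lambda}}=\sup_{t}t\,|f(t)|^{2}$ (for real continuous $f$, with $z=\sigma_\lambda^X(x)$) reduces the Cauchy property of this net to the uniform convergence of $t\mapsto t^{1/2}(t+\varepsilon)^{-1/3}$ on the spectrum of $a_{\lambda}$, which holds by Dini's theorem since these functions increase to the continuous limit $t^{1/6}$ on a compact set. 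A parallel functional-calculus computation then gives $\langle y_{\lambda},y_{\lambda}\rangle_{A_{\lambda}}=a_{\lambda}^{1/3}$ and $y_{\lambda}\,a_{\lambda}^{1/3}=\sigma_{\lambda}^{X}(x)$, i.e. $\sigma_{\lambda}^{X}(x)=y_{\lambda}\langle y_{\lambda},y_{\lambda}\rangle_{A_{\lambda}}$. For uniqueness at level $\lambda$, any solution $y$ satisfies $\langle y,y\rangle_{A_{\lambda}}^{3}=a_{\lambda}$, so $\langle y,y\rangle_{A_{\lambda}}=a_{\lambda}^{1/3}=:c$ is forced; then the standard approximate-factorization estimate $\left\Vert y-y\,c(c+\varepsilon)^{-1}\right\Vert_{X_{\lambda}}^{2}=\sup_{t}\varepsilon^{2}t(t+\varepsilon)^{-2}\to 0$ together with $y\,c(c+\varepsilon)^{-1}=\sigma_{\lambda}^{X}(x)(c+\varepsilon)^{-1}$ shows $y=\lim_{\varepsilon\downarrow 0}\sigma_{\lambda}^{X}(x)(c+\varepsilon)^{-1}$ is determined by $\sigma_{\lambda}^{X}(x)$. (These two facts are exactly the classical statement for Hilbert $C^{\ast}$-modules, which one may instead simply invoke.)

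Next I would glue. Applying $\sigma_{\lambda\mu}^{X}$ to $\sigma_{\lambda}^{X}(x)=y_{\lambda}\langle y_{\lambda},y_{\lambda}\rangle_{A_{\lambda}}$ and using that $\sigma_{\lambda\mu}^{X}$ respects the right action and the inner product yields $\sigma_{\mu}^{X}(x)=\sigma_{\lambda\mu}^{X}(y_{\lambda})\,\langle \sigma_{\lambda\mu}^{X}(y_{\lambda}),\sigma_{\lambda\mu}^{X}(y_{\lambda})\rangle_{A_{\mu}}$, so $\sigma_{\lambda\mu}^{X}(y_{\lambda})$ solves the level-$\mu$ equation; by the uniqueness established at level $\mu$ this forces $\sigma_{\lambda\mu}^{X}(y_{\lambda})=y_{\mu}$. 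Hence $(y_{\lambda})_{\lambda}$ is a coherent family and defines $y\in X=\lim\limits_{\leftarrow \lambda}X_{\lambda}$ with $\sigma_{\lambda}^{X}(y)=y_{\lambda}$. Since $x$ and $y\langle y,y\rangle_{A}$ have the same image $y_{\lambda}\langle y_{\lambda},y_{\lambda}\rangle_{A_{\lambda}}$ in every $X_{\lambda}$ and $X$ is Hausdorff, $x=y\langle y,y\rangle_{A}$; and if $y'$ were another global solution, then each $\sigma_{\lambda}^{X}(y')$ solves the level-$\lambda$ equation, whence $\sigma_{\lambda}^{X}(y')=y_{\lambda}=\sigma_{\lambda}^{X}(y)$ for all $\lambda$, giving $y'=y$.

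I expect the main obstacle to be the compatibility (gluing) step: it is precisely the uniqueness at each level that guarantees $\sigma_{\lambda\mu}^{X}(y_{\lambda})=y_{\mu}$, so uniqueness has to be proved \emph{before} the global $y$ is constructed, not after. The secondary technical point is the uniform-convergence (Dini) estimate underlying the limit formula for $y_{\lambda}$, which is what legitimizes the functional-calculus definition and the computation of $\langle y_{\lambda},y_{\lambda}\rangle_{A_{\lambda}}$.
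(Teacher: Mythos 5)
Your proof is correct and takes essentially the same route as the paper: the paper also fixes $x$, invokes the Hilbert $C^{\ast}$-module factorization theorem (citing \cite[Proposition 2.31]{RW}) at each level $\lambda$ to obtain a unique $y_{\lambda}$, uses that level-wise uniqueness to deduce $\sigma_{\lambda\mu}^{X}(y_{\lambda})=y_{\mu}$, and then passes to the inverse limit. The only difference is that the paper simply cites the $C^{\ast}$-level existence and uniqueness where you reprove them by functional calculus.
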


\begin{proof}
Let $x\in X$. For each $\lambda \in \Lambda $, there is a unique element $%
y_{\lambda }\in X_{\lambda }$ such that $\sigma _{\lambda }^{X}\left(
x\right) =y_{\lambda }\left\langle y_{\lambda },y_{\lambda }\right\rangle
_{A_{\lambda }}$(see, for example, \cite[Proposition 2.31]{RW}). Let $%
\lambda ,\mu \in \Lambda $ with $\lambda \geq \mu .$ From 
\begin{equation*}
\sigma _{\mu }^{X}\left( x\right) =\sigma _{\lambda \mu }^{X}(\sigma
_{\lambda }^{X}\left( x\right) )=\sigma _{\lambda \mu }^{X}\left( y_{\lambda
}\right) \left\langle \sigma _{\lambda \mu }^{X}\left( y_{\lambda }\right)
,\sigma _{\lambda \mu }^{X}\left( y_{\lambda }\right) \right\rangle _{A_{\mu
}}
\end{equation*}%
and \cite[Proposition 2.31]{RW}, we deduce that $\sigma _{\lambda \mu
}^{X}\left( y_{\lambda }\right) =y_{\mu }.$ Therefore, there exists $y\in X$
such that $\sigma _{\lambda }^{X}\left( y\right) =y_{\lambda }$ for all $%
\lambda \in \Lambda $ and $x=y\left\langle y,y\right\rangle _{A}.$ Moreover, 
$y$ is unique with this property.
\end{proof}

\begin{proposition}
\label{bounded}Let $(X,A)$ be a full Hilbert pro-$C^{\ast }$-bimodule and $%
\{T_{i}\}_{i\in I}$ a net in $M(X)$.

\begin{enumerate}
\item If $\{T_{i}\}_{i\in I}$ converges strictly to $0$,$\ $then $%
\{p_{\left( \lambda ,a,x\right) }(T_{i})\}_{i\in I}$ converges to $0$ for
all $a\in A$, for all $x\in X$ and for all $\lambda \in \Lambda .$

\item If $\{T_{i}\}_{i\in I}$ is bounded and $\{p_{\left( \lambda
,a,x\right) }(T_{i})\}_{i\in I}$ converges to $0$ for all $a\in A$, for all $%
x\in X$ and for all $\lambda \in \Lambda ,$ then $\{T_{i}\}_{i\in I}$
converges strictly to $0.$
\end{enumerate}
\end{proposition}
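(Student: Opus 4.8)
The plan is to compare the two families of seminorms coordinate by coordinate. Identifying $M(X)$ with $L_A(A,X)$ and $A$ with its image in $M(A)=L_A(A)$ (so that $a\in A$ acts as left multiplication $L_a$), the strict seminorm of the Definition reads $p_{(\lambda,a)}(T)=p_{\lambda,L_A(A,X)}(T\circ L_a)+p_{\lambda,L_A(A,X)}(\Phi_A(a)\circ T)$, while the seminorm from \cite{J2} reads $p_{(\lambda,a,x)}(T)=p_\lambda^A(T(a))+p_\lambda(T^*(x))$. I would first treat the ``first coordinates''. Since $T$ is right $A$-linear, $(T\circ L_a)(b)=T(ab)=T(a)\,b$, so $T\circ L_a$ is the operator $b\mapsto T(a)\,b$, whose operator seminorm equals $p_\lambda^A(T(a))$: the inequality $\sup_{p_\lambda(b)\le1}p_\lambda^A(T(a)\,b)\le p_\lambda^A(T(a))$ is the bimodule bound, and the reverse follows by testing against an approximate unit. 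Thus the first terms of the two families coincide exactly, which disposes of one coordinate in both (1) and (2) with no further hypotheses (and in (2) one extracts $p_\lambda^A(T_i(a))\to0$ from the hypothesis by fixing $x$ and using nonnegativity of both summands).

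For the ``second coordinates'' I would pass to adjoints: $(\Phi_A(a)\circ T)^*=T^*\circ\Phi_A(a^*)$, and since the adjoint preserves the operator seminorm we get $p_{\lambda,L_A(A,X)}(\Phi_A(a)\circ T)=\sup_{p_\lambda^A(z)\le1}p_\lambda(T^*(a^*z))$. For direction (1), given $x\in X$ I would invoke the left-module analogue of Lemma \ref{factor} to write $x={}_A\langle y,y\rangle\,y$; setting $a={}_A\langle y,y\rangle$ (so $a^*=a$) gives $a^*y=x$, whence $p_\lambda(T_i^*(x))\le p_{\lambda,L_A(A,X)}(\Phi_A(a)\circ T_i)\,p_\lambda^A(y)\to0$ by strict convergence. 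This argument is pointwise and uses no boundedness, which is essential because a strictly convergent net need not be bounded, so (1) must be proved without the hypothesis available in (2).

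The hard part is direction (2), where one must upgrade the pointwise smallness of $p_\lambda(T_i^*(x))$ to the uniform estimate $\sup_{p_\lambda^A(z)\le1}p_\lambda(T_i^*(a^*z))\to0$; a single $a$ cannot be factored through one inner product, and this is exactly where boundedness becomes indispensable. Fixing $\lambda$ and $\epsilon>0$, I would use fullness to approximate $a^*$ by a finite sum $w=\sum_{k=1}^n{}_A\langle u_k,v_k\rangle$ with $p_\lambda(a^*-w)<\epsilon$, and split $T_i^*(a^*z)=T_i^*((a^*-w)z)+T_i^*(wz)$. The first summand is controlled by $M_\lambda\,\epsilon$, where $M_\lambda$ bounds $p_{\lambda,L_A(A,X)}(T_i)=p_{\lambda,L_A(X,A)}(T_i^*)$, using $p_\lambda^A((a^*-w)z)\le p_\lambda(a^*-w)\,p_\lambda^A(z)$. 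For the second summand the bimodule identity gives $wz=\sum_k u_k\langle v_k,z\rangle_A$, so right $A$-linearity yields $T_i^*(wz)=\sum_k T_i^*(u_k)\langle v_k,z\rangle_A$, which is at most $\sum_k p_\lambda(T_i^*(u_k))\,p_\lambda^A(v_k)$ for $p_\lambda^A(z)\le1$ and tends to $0$ along the net since each $p_\lambda(T_i^*(u_k))\to0$. Letting $i$ run and then $\epsilon\to0$ gives $\limsup_i\sup_{p_\lambda^A(z)\le1}p_\lambda(T_i^*(a^*z))\le M_\lambda\epsilon\to0$. Combining the two coordinates in each direction then yields the claimed equivalence of the two strict topologies on bounded subsets.
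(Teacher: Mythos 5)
Your proof is correct and follows essentially the same route as the paper's: part (1) via the factorization $x=y\langle y,y\rangle_{A}={}_{A}\langle y,y\rangle y$ from Lemma \ref{factor}, and part (2) via boundedness plus an $\varepsilon$-approximation by finite sums of left inner products, which under the isomorphism $\Phi_{A}:A\rightarrow K_{A}(X)$ is exactly the paper's approximation of $S\in K_{A}(X)$ by $\sum_{k}\theta_{x_{k},y_{k}}$. Your explicit verification that $p_{\lambda}^{M(A)}(T\cdot a)=p_{\lambda}^{A}(T(a))$ (via an approximate unit) is a detail the paper leaves implicit, but it is not a different method.
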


\begin{proof}
(1) If the net $\{T_{i}\}_{i\in I}$ converges strictly to $0$, then $%
\{p_{\lambda }^{A}\left( T_{i}(a)\right) \}_{i\in I}$ converges to $0$ for
all $a\in A$ and $\lambda \in \Lambda $. Let $x\in X$ and $\lambda \in
\Lambda $. Then, by Lemma \ref{factor}, there is $y\in X$ such that $%
x=y\left\langle y,y\right\rangle _{A}=\theta _{y,y}\left( y\right) $. From%
\begin{eqnarray*}
p_{\lambda }\left( T_{i}^{\ast }\left( x\right) \right) &=&p_{\lambda
}\left( T_{i}^{\ast }\left( \theta _{y,y}\left( y\right) \right) \right)
\leq p_{\lambda ,L_{A}(X,A)}\left( T_{i}^{\ast }\circ \theta _{y,y}\right)
p_{\lambda }^{A}\left( y\right) \\
&=&p_{\lambda ,L_{A}(A,X)}\left( \theta _{y,y}\circ T_{i}\right) p_{\lambda
}^{A}\left( y\right) =p_{\lambda }^{M(A)}\left( \theta _{y,y}\circ
T_{i}\right) p_{\lambda }^{A}\left( y\right) ,
\end{eqnarray*}%
we deduce that the net $\{p_{\lambda }\left( T_{i}^{\ast }\left( x\right)
\right) \}_{i\in I}$ converges to $0$.

(2) If $\{p_{\left( \lambda ,a,x\right) }\left( T_{i}\right) \}_{i\in I}$
converges to $0$ for all $a\in A,$ $x\in X$ and $\lambda \in \Lambda $, then 
$\{p_{\lambda }^{A}\left( T_{i}(a)\right) \}_{i\in I}$ converges to $0$ for
all $a\in A$ and $\lambda \in \Lambda $. Let $S\in K_{A}(X)$, $\lambda \in
\Lambda $ and $\varepsilon >0$. Then there is $\tsum\limits_{k=1}^{n}\theta
_{x_{k},y_{k}}$ such that $p_{\lambda ,L_{A}(X)}\left(
S-\tsum\limits_{k=1}^{n}\theta _{x_{k},y_{k}}\right) <\varepsilon ,$ and
since $\{T_{i}\}_{i\in I}$ is bounded, there is $M_{\lambda }>0$ such that $%
p_{\lambda }^{M(A)}\left( T_{i}\right) \leq M_{\lambda }$ for all $i\in I$.
From 
\begin{eqnarray*}
p_{\lambda }^{M(A)}\left( S\circ T_{i}\right) &\leq &p_{\lambda
,L_{A}(X)}\left( S-\tsum\limits_{k=1}^{n}\theta _{x_{k},y_{k}}\right) \
p_{\lambda }^{M(A)}\left( T_{i}\right) +p_{\lambda }^{M(A)}\left(
\tsum\limits_{k=1}^{n}\theta _{x_{k},y_{k}}\circ T_{i}\right) \\
&\leq &\varepsilon M_{\lambda }+p_{\lambda ,L_{A}(A,X)}\left(
\tsum\limits_{k=1}^{n}\theta _{x_{k},T_{i}^{\ast }\left( y_{k}\right)
}\right) \\
&\leq &\varepsilon M_{\lambda }+\tsum\limits_{k=1}^{n}p_{\lambda }^{A}\left(
x_{k}\right) p_{\lambda }\left( T_{i}^{\ast }\left( y_{k}\right) \right)
\end{eqnarray*}%
we deduce that $\{p_{\lambda }^{M(A)}\left( S\circ T_{i}\right) \}_{i\in I}$
converges to $0$.
\end{proof}

Let $\left( X,A\right) $ and $\left( Y,B\right) $ be two Hilbert pro-$%
C^{\ast }$-bimodules.

\begin{definition}
\textit{A morphism of Hilbert pro-}$C^{\ast }$\textit{-bimodules from }$%
\left( X,A\right) $ to $\left( Y,B\right) $ is a pair $\left( \Phi ,\varphi
\right) $ consisting of a pro-$C^{\ast }$-morphism $\varphi :A\rightarrow B$
and a map $\Phi :X\rightarrow Y$ such that:

\begin{enumerate}
\item $\Phi \left( xa\right) =\Phi \left( x\right) \varphi \left( a\right) $
for all $x\in X$ and for all $a\in A;$

\item $\Phi \left( ax\right) =\varphi \left( a\right) \Phi \left( x\right) $
for all $x\in X$ and for all $a\in A;$

\item $\left\langle \Phi \left( x\right) ,\Phi \left( y\right) \right\rangle
_{B}=\varphi \left( \left\langle x,y\right\rangle _{A}\right) $ for all $%
x,y\in X;$

\item $_{B}\left\langle \Phi \left( x\right) ,\Phi \left( y\right)
\right\rangle =\varphi \left( _{A}\left\langle x,y\right\rangle \right) $
for all $x,y\in X$.
\end{enumerate}
\end{definition}

{The relation $\left( 3\right) $ implies the relation $(1)$ and the relation 
$(4)$ implies $(2)$.}

{If }$\left( \Phi ,\varphi \right) :${\ }$\left( X,A\right) $ $\rightarrow
\left( Y,B\right) $ {is a morphism of Hilbert pro-$C^{\ast }$-bimodules,
then }$\Phi ${\ is continuous, since for each }$\delta \in \Delta ,$ there
is $\lambda \in \Lambda $ such that%
\begin{equation*}
q_{\delta }^{B}\left( \Phi \left( x\right) \right) ^{2}=q_{\delta }\left(
\left\langle \Phi \left( x\right) ,\Phi \left( x\right) \right\rangle
_{B}\right) =q_{\delta }\left( \varphi \left( \left\langle x,x\right\rangle
_{A}\right) \right) \leq p_{\lambda }\left( \left\langle x,x\right\rangle
_{A}\right) =p_{\lambda }^{A}\left( x\right) ^{2}
\end{equation*}%
for all $x\in X${. }It is easy to check that if $\varphi $ is injective,
then $\Phi $ is injective, and if $\left( X,A\right) $ is full and $\Phi $
is injective, then $\varphi $ is injective.

\begin{definition}
An isomorphism of \textit{Hilbert pro-}$C^{\ast }$\textit{-bimodules is a
morphism of Hilbert pro-}$C^{\ast }$\textit{-bimodules }$\left( \Phi
,\varphi \right) $ such that $\varphi $ is a pro-$C^{\ast }$-isomorphism and
the map $\Phi $ is bijective.

\textit{Hilbert pro-}$C^{\ast }$\textit{-bimodules }$\left( X,A\right) $%
\textit{\ and }$\left( Y,B\right) $ are \textit{isomorphic }if there is an
isomorphism of Hilbert pro-$C^{\ast }$-bimodules $\left( \Phi ,\varphi
\right) :\left( X,A\right) \rightarrow \left( Y,B\right) $.
\end{definition}

\begin{definition}
A \textit{morphism of Hilbert pro-}$C^{\ast }$\textit{-bimodules }{$\left(
\Phi ,\varphi \right) :$}$\left( X,A\right) $ $\rightarrow $ $\left(
M(Y),M(B)\right) ${\ is nondegenerate if }$\varphi $ is nondegenerate and $%
\left[ \Phi \left( X\right) B\right] =Y$.
\end{definition}

\begin{remark}
If {$\left( \Phi ,\varphi \right) :$}$\left( X,A\right) $ $\rightarrow $ $%
\left( M(Y),M(B)\right) ${\ is nondegenerate and }$\left( X,A\right) $ is
full{, then $\left( \Phi ,\varphi \right) $ is nondegenerate in the sense of 
\cite[Definition 3.1]{J3}, since}%
\begin{eqnarray*}
\left[ \Phi \left( X\right) ^{\ast }Y\right] &=&\left[ \Phi \left( X\right)
^{\ast }\Phi \left( X\right) B\right] =\left[ \left\langle \Phi \left(
X\right) ,\Phi \left( X\right) \right\rangle _{M(B)}B\right] \\
&=&\left[ \varphi \left( \left\langle X,X\right\rangle _{A}\right) B\right] =%
\left[ \varphi \left( A\right) B\right] =B.
\end{eqnarray*}
\end{remark}

\begin{lemma}
\label{strict1}Let $(X,A)$ be a full Hilbert pro-$C^{\ast }$-bimodule. Then
the maps 
\begin{equation*}
\left( \chi _{\lambda }^{L_{A}(A,X)},\pi _{\lambda }^{M(A)}\right) :\left(
M(X),M(A)\right) \rightarrow \left( M(X_{\lambda }),M(A_{\lambda })\right)
,\lambda \in \Lambda ,
\end{equation*}%
where $\pi _{\lambda }^{M(A)}=\chi _{\lambda }^{L_{A}(A)},$ and 
\begin{equation*}
\left( \chi _{\lambda \mu }^{L_{A}(A,X)},\pi _{\lambda \mu }^{M(A)}\right)
:\left( M(X_{\lambda }),M(A_{\lambda })\right) \rightarrow \left( M(X_{\mu
}),M(A_{\mu })\right) ,\lambda ,\mu \in \Lambda \text{ with }\lambda \geq \mu
\end{equation*}%
where$\ \pi _{\lambda \mu }^{M(A)}=\chi _{\lambda \mu }^{L_{A}(A)}$, are all
strictly continuous morphisms of Hilbert bimodules.
\end{lemma}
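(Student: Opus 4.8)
The plan is to check, for each of the two families of maps, the four conditions in the definition of a morphism of Hilbert pro-$C^{\ast}$-bimodules and then strict continuity; I would treat the canonical projections $(\chi_{\lambda}^{L_A(A,X)},\pi_{\lambda}^{M(A)})$ in detail and obtain the connecting maps by the identical argument one level down. First I would fix the identifications: fullness of $(X,A)$ forces each $(X_{\lambda},A_{\lambda})$ to be full, so the Remark after Proposition \ref{bimodule} applies at every level and endows $M(X_{\lambda})=L_{A_{\lambda}}(A_{\lambda},X_{\lambda})$ with the structure of a Hilbert $M(A_{\lambda})-M(A_{\lambda})$ $C^{\ast}$-bimodule, where $M(A_{\lambda})=L_{A_{\lambda}}(A_{\lambda})$; moreover $M(X)=L_A(A,X)=\lim\limits_{\leftarrow}L_{A_{\lambda}}(A_{\lambda},X_{\lambda})$ and $M(A)=L_A(A)=\lim\limits_{\leftarrow}L_{A_{\lambda}}(A_{\lambda})$ with the $\chi_{\lambda}$'s and $\chi_{\lambda\mu}$'s as the canonical projections and connecting maps (Section 2), so in particular $\pi_{\lambda}^{M(A)}=\chi_{\lambda}^{L_A(A)}$ is a pro-$C^{\ast}$-morphism with $\|\pi_{\lambda}^{M(A)}(m)\|=p_{\lambda}^{M(A)}(m)$.

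Next I would isolate the two elementary properties of the canonical maps that carry the argument. Using the defining formula $\chi_{\lambda}^{L_A(A,X)}(T)(\pi_{\lambda}^A(a))=\sigma_{\lambda}^X(T(a))$ and the inner-product compatibility of the projections $\sigma_{\lambda}^X,\pi_{\lambda}^A$, one verifies on the elements $\pi_{\lambda}^A(a)$, which exhaust $A_{\lambda}$, that $\chi_{\lambda}$ is multiplicative and adjoint-preserving across the relevant spaces, namely $\chi_{\lambda}^{L_A(X,A)}(S)\circ\chi_{\lambda}^{L_A(A,X)}(T)=\chi_{\lambda}^{L_A(A)}(S\circ T)$ and $\chi_{\lambda}^{L_A(A,X)}(T)^{\ast}=\chi_{\lambda}^{L_A(X,A)}(T^{\ast})$. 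Granting these, condition $(1)$ is immediate, and condition $(3)$ follows from $\langle\chi_{\lambda}(T_1),\chi_{\lambda}(T_2)\rangle_{M(A_{\lambda})}=\chi_{\lambda}(T_1)^{\ast}\circ\chi_{\lambda}(T_2)=\chi_{\lambda}(T_1^{\ast}\circ T_2)=\pi_{\lambda}^{M(A)}(\langle T_1,T_2\rangle_{M(A)})$.

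The crux, which I expect to be the main obstacle, is conditions $(2)$ and $(4)$: after applying multiplicativity both reduce to the single commuting square
\begin{equation*}
\chi_{\lambda}^{L_A(X)}\circ\overline{\Phi_A}=\overline{\Phi_{A_{\lambda}}}\circ\chi_{\lambda}^{L_A(A)}:M(A)\rightarrow L_{A_{\lambda}}(X_{\lambda}).
\end{equation*}
On $A\subseteq M(A)$ this is a direct computation, since $\chi_{\lambda}^{L_A(X)}(\Phi_A(a))(\sigma_{\lambda}^X(x))=\sigma_{\lambda}^X(a\cdot x)=\pi_{\lambda}^A(a)\cdot\sigma_{\lambda}^X(x)=\Phi_{A_{\lambda}}(\pi_{\lambda}^A(a))(\sigma_{\lambda}^X(x))$, so both composites restrict on $A$ to $a\mapsto\Phi_{A_{\lambda}}(\pi_{\lambda}^A(a))$. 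The difficulty is that $A$ is not dense in $M(A)$ for the topology of $M(A)$, so I cannot simply pass to a limit. Instead I would observe that $a\mapsto\Phi_{A_{\lambda}}(\pi_{\lambda}^A(a))$ is a nondegenerate pro-$C^{\ast}$-morphism from $A$ into $M(K_{A_{\lambda}}(X_{\lambda}))=L_{A_{\lambda}}(X_{\lambda})$, its range being $K_{A_{\lambda}}(X_{\lambda})$ because $\Phi_{A_{\lambda}}$ is an isomorphism and $\pi_{\lambda}^A$ is onto, while both $\chi_{\lambda}^{L_A(X)}\circ\overline{\Phi_A}$ and $\overline{\Phi_{A_{\lambda}}}\circ\chi_{\lambda}^{L_A(A)}$ are pro-$C^{\ast}$-morphisms on $M(A)$ extending it and strictly continuous on bounded sets. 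By the uniqueness of such an extension (Section 2) the two composites agree, which yields $(2)$ and $(4)$.

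Finally, strict continuity is almost free once the morphism identities hold. For a generating strict seminorm $S\mapsto\|S\cdot\pi_{\lambda}^A(a)\|+\|\pi_{\lambda}^A(a)\cdot S\|$ on $M(X_{\lambda})$, conditions $(1)$ and $(2)$ give
\begin{equation*}
\|\chi_{\lambda}^{L_A(A,X)}(T)\cdot\pi_{\lambda}^A(a)\|+\|\pi_{\lambda}^A(a)\cdot\chi_{\lambda}^{L_A(A,X)}(T)\|=p_{\lambda}^{M(A)}(T\cdot a)+p_{\lambda}^{M(A)}(a\cdot T)=p_{(\lambda,a)}(T),
\end{equation*}
so each target strict seminorm pulls back to a single source strict seminorm, and the same computation for $\chi_{\lambda}^{L_A(A)}$ shows that $\pi_{\lambda}^{M(A)}$ is strictly continuous. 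The connecting maps $(\chi_{\lambda\mu}^{L_A(A,X)},\chi_{\lambda\mu}^{L_A(A)}):(M(X_{\lambda}),M(A_{\lambda}))\rightarrow(M(X_{\mu}),M(A_{\mu}))$ are handled verbatim, replacing the projections by the connecting maps and the pair $(A,X)$ by $(A_{\lambda},X_{\lambda})$ throughout; the analogous commuting square $\chi_{\lambda\mu}^{L_A(X)}\circ\overline{\Phi_{A_{\lambda}}}=\overline{\Phi_{A_{\mu}}}\circ\chi_{\lambda\mu}^{L_A(A)}$ is again checked on $A_{\lambda}$ and extended using that $A_{\lambda}$ is strictly dense in $M(A_{\lambda})$.
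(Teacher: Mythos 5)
Your proposal is correct and follows essentially the same route as the paper: both verify the two inner-product compatibilities using multiplicativity and adjoint-preservation of the canonical maps, reduce the left inner product to the commuting square $\chi _{\lambda }^{L_{A}(X)}\circ \overline{\Phi _{A}}=\overline{\Phi _{A_{\lambda }}}\circ \chi _{\lambda }^{L_{A}(A)}$, and then check strict continuity against the generating seminorms. The only differences are cosmetic: you treat the projections in detail and the connecting maps by analogy (the paper does the reverse), and you explicitly justify the commuting square via uniqueness of strictly continuous extensions of nondegenerate morphisms, a step the paper uses without comment.
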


\begin{proof}
Let $\lambda ,\mu \in \Lambda $ with $\lambda \geq \mu $. For $%
T_{1},T_{2}\in M(X_{\lambda })$ we have 
\begin{eqnarray*}
\left\langle \chi _{\lambda \mu }^{L_{A}(A,X)}\left( T_{1}\right) ,\chi
_{\lambda \mu }^{L_{A}(A,X)}\left( T_{2}\right) \right\rangle _{M(A_{\mu })}
&=&\chi _{\lambda \mu }^{L_{A}(A,X)}\left( T_{1}\right) ^{\ast }\circ \chi
_{\lambda \mu }^{L_{A}(A,X)}\left( T_{2}\right) \\
&=&\chi _{\lambda \mu }^{L_{A}(X,A)}\left( T_{1}^{\ast }\right) \circ \chi
_{\lambda \mu }^{L_{A}(A,X)}\left( T_{2}\right) \\
&=&\chi _{\lambda \mu }^{L_{A}(A)}\left( T_{1}^{\ast }\circ T_{2}\right)
=\pi _{\lambda \mu }^{M(A)}\left( \left\langle T_{1},T_{2}\right\rangle
_{M(A_{\lambda })}\right)
\end{eqnarray*}%
and 
\begin{eqnarray*}
_{M(A_{\mu })}\left\langle \chi _{\lambda \mu }^{L_{A}(A,X)}\left(
T_{1}\right) ,\chi _{\lambda \mu }^{L_{A}(A,X)}\left( T_{2}\right)
\right\rangle &=&\overline{\Phi _{A_{\mu }}^{-1}}\left( \chi _{\lambda \mu
}^{L_{A}(A,X)}\left( T_{1}\right) \circ \chi _{\lambda \mu
}^{L_{A}(A,X)}\left( T_{2}\right) ^{\ast }\right) \\
&=&\overline{\Phi _{A_{\mu }}^{-1}}\left( \chi _{\lambda \mu
}^{L_{A}(A,X)}\left( T_{1}\right) \circ \chi _{\lambda \mu
}^{L_{A}(X,A)}\left( T_{2}^{\ast }\right) \right) \\
&=&\overline{\Phi _{A_{\mu }}^{-1}}\left( \chi _{\lambda \mu
}^{L_{A}(X)}\left( T_{1}\circ T_{2}^{\ast }\right) \right) \\
&=&\chi _{\lambda \mu }^{L_{A}(A)}\left( \overline{\Phi _{A_{\lambda }}^{-1}}%
\left( T_{1}\circ T_{2}^{\ast }\right) \right) \\
&=&\pi _{\lambda \mu }^{M(A)}\left( _{M(A_{\lambda })}\left\langle
T_{1},T_{2}\right\rangle \right) .
\end{eqnarray*}%
Therefore, $\left( \chi _{\lambda \mu }^{L_{A}(A,X)},\pi _{\lambda \mu
}^{M(A)}\right) $ is a morphism of Hilbert $C^{\ast }$-bimodules.

Let $\{T_{i}\}_{i\in I}$ be a net in $M(X_{\lambda })$ which converges
strictly to $0$. From%
\begin{eqnarray*}
\left\Vert \chi _{\lambda \mu }^{L_{A}(A,X)}\left( T_{i}\right) \pi _{\mu
}^{A}\left( a\right) \right\Vert _{X_{\mu }} &=&\left\Vert \chi _{\lambda
\mu }^{L_{A}(A,X)}\left( T_{i}\right) \pi _{\lambda \mu }^{M(A)}\left( \pi
_{\lambda }^{A}\left( a\right) \right) \right\Vert _{X_{\mu }} \\
&=&\left\Vert \sigma _{\lambda \mu }^{X}\left( T_{i}\left( \pi _{\lambda
}^{A}\left( a\right) \right) \right) \right\Vert _{X_{\mu }}\leq \left\Vert
T_{i}\left( \pi _{\lambda }^{A}\left( a\right) \right) \right\Vert
_{X_{\lambda }}
\end{eqnarray*}%
for all $a\in A$, and%
\begin{eqnarray*}
\left\Vert \chi _{\mu }^{L_{A}(X)}\left( S\right) \circ \chi _{\lambda \mu
}^{L_{A}(A,X)}\left( T_{i}\right) \right\Vert _{M(X_{\mu })} &=&\left\Vert
\chi _{\lambda \mu }^{L_{A}(X)}\left( \chi _{\lambda }^{L_{A}(X)}\left(
S\right) \right) \circ \chi _{\lambda \mu }^{L_{A}(A,X)}\left( T_{i}\right)
\right\Vert _{L_{A_{\mu }}(A_{\mu },X_{\mu })} \\
&=&\left\Vert \chi _{\lambda \mu }^{L_{A}(A,X)}\left( \chi _{\lambda
}^{L_{A}(X)}\left( S\right) \circ T_{i}\right) \right\Vert _{L_{A_{\mu
}}(A_{\mu },X_{\mu })} \\
&\leq &\left\Vert \chi _{\lambda }^{L_{A}(X)}\left( S\right) \circ
T_{i}\right\Vert _{L_{A_{\lambda }}(A_{\lambda },X_{\lambda })}
\end{eqnarray*}%
for all$\ S\in K_{A}(X)$, and taking into account that $K_{A_{\mu }}(X_{\mu
})=\chi _{\mu }^{L_{A}(X)}\left( K_{A}(X)\right) $, we deduce that the net $%
\{\chi _{\lambda \mu }^{M(X)}\left( T_{i}\right) \}_{i\in I}$ converges
strictly to $0$.

In a similar way, we show that the maps $\left( \chi _{\lambda
}^{L_{A}(A,X)},\pi _{\lambda }^{M(A)}\right) :\left( M(X),M(A)\right)
\rightarrow \left( M(X_{\lambda }),M(A_{\lambda })\right) ,\lambda \in
\Lambda $ are all strictly continuous morphisms of Hilbert bimodules.
\end{proof}

\begin{theorem}
\label{multiplier}Let $\left( X,A\right) $ be a full Hilbert pro-$C^{\ast }$%
-bimodule.

\begin{enumerate}
\item $M(X)$ is complete with respect to the strict topology;

\item $\left( \iota _{X},\iota _{A}\right) :\left( X,A\right) \rightarrow
\left( M(X),M(A)\right) $, where $\iota _{X}\left( x\right) \left( a\right)
=xa$ and $\iota _{A}\left( b\right) \left( a\right) =ba$ for all $x\in X$
and $a,b\in A$, is a nondegenerate morphism of Hilbert pro-$C^{\ast }$%
-bimodules;

\item $X$ can be identified with a closed $M(A)-M(A)$ pro-$C^{\ast }$%
-sub-bimodule of $M(X)$ which is dense in $M(X)$ with respect to the strict
topology.
\end{enumerate}
\end{theorem}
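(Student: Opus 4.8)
The plan is to prove the three assertions in order, reducing (1) to the corresponding $C^{\ast }$-algebraic fact through the Arens--Michael decomposition and treating (2) and (3) by direct computation, the decisive observation for (3) being that $\iota _{X}(X)$ coincides with the module of compact operators $K_{A}(A,X)$.

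For (1), I would first record the seminorm identity tying the strict topology on $M(X)$ to those on the quotients $M(X_{\lambda })$. Since $\big(\chi _{\lambda }^{M(X)},\pi _{\lambda }^{M(A)}\big)$ is a morphism of Hilbert bimodules (Lemma \ref{strict1}), one has $\chi _{\lambda }^{M(X)}(T\cdot a)=\chi _{\lambda }^{M(X)}(T)\cdot \pi _{\lambda }^{A}(a)$ and $\chi _{\lambda }^{M(X)}(a\cdot T)=\pi _{\lambda }^{A}(a)\cdot \chi _{\lambda }^{M(X)}(T)$, whence, using $p_{\lambda }^{M(A)}(\,\cdot \,)=\Vert \chi _{\lambda }^{M(X)}(\,\cdot \,)\Vert $,
\begin{equation*}
p_{(\lambda ,a)}(T)=\big\Vert \chi _{\lambda }^{M(X)}(T)\cdot \pi _{\lambda }^{A}(a)\big\Vert _{M(X_{\lambda })}+\big\Vert \pi _{\lambda }^{A}(a)\cdot \chi _{\lambda }^{M(X)}(T)\big\Vert _{M(X_{\lambda })}.
\end{equation*}
As $\pi _{\lambda }^{A}$ is onto $A_{\lambda }$, the right-hand side runs over exactly the $C^{\ast }$-strict seminorms of $M(X_{\lambda })$ as $a$ ranges over $A$; thus the strict topology on $M(X)=\lim_{\leftarrow \lambda }M(X_{\lambda })$ is the projective limit of the $C^{\ast }$-strict topologies. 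Concretely, given a strictly Cauchy net $\{T_{i}\}$ in $M(X)$, each net $\{\chi _{\lambda }^{M(X)}(T_{i})\}$ is strictly Cauchy in $M(X_{\lambda })$ and hence converges strictly to some $S_{\lambda }$ by strict completeness of the $C^{\ast }$-multiplier bimodule (cf. \cite{EKQR}); the strict continuity of the connecting morphisms $\chi _{\lambda \mu }^{M(X)}$ (Lemma \ref{strict1}) forces $\chi _{\lambda \mu }^{M(X)}(S_{\lambda })=S_{\mu }$, so $(S_{\lambda })_{\lambda }$ defines an element $T\in \lim_{\leftarrow \lambda }M(X_{\lambda })=M(X)$ to which $\{T_{i}\}$ converges strictly.

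For (2), I would check that $\iota _{X}(x)\colon a\mapsto xa$ is adjointable with $\iota _{X}(x)^{\ast }=\langle x,\cdot \rangle _{A}$, so that $\iota _{X}(x)\in L_{A}(A,X)=M(X)$, and that $\iota _{A}$ is the usual nondegenerate embedding of $A$ into $M(A)=L_{A}(A)$. The bimodule-morphism axioms reduce, since (3) implies (1) and (4) implies (2), to the two inner-product identities $\langle \iota _{X}(x),\iota _{X}(y)\rangle _{M(A)}=\iota _{X}(x)^{\ast }\circ \iota _{X}(y)=\iota _{A}(\langle x,y\rangle _{A})$ and $_{M(A)}\langle \iota _{X}(x),\iota _{X}(y)\rangle =\overline{\Phi _{A}^{-1}}\big(\iota _{X}(x)\circ \iota _{X}(y)^{\ast }\big)=\iota _{A}(\,_{A}\langle x,y\rangle )$, where the latter uses $\iota _{X}(x)\circ \iota _{X}(y)^{\ast }=\Phi _{A}(\,_{A}\langle x,y\rangle )$ and $\overline{\Phi _{A}^{-1}}\circ \Phi _{A}=\iota _{A}$. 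For nondegeneracy, $\iota _{A}$ is nondegenerate and Lemma \ref{factor} gives $XA=X$, so that $[\iota _{X}(X)A]=[XA]=X$.

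For (3), the crux is the identification $\iota _{X}(X)=K_{A}(A,X)$. From $\theta _{x,a}(b)=x\langle a,b\rangle _{A}=(xa^{\ast })b=\iota _{X}(xa^{\ast })(b)$ one gets $\theta _{x,a}=\iota _{X}(xa^{\ast })\in \iota _{X}(X)$, while Lemma \ref{factor} lets me write any $x$ as $x=wc$ with $c\in A$, whence $\iota _{X}(x)=\theta _{w,c^{\ast }}\in K_{A}(A,X)$; since $\iota _{X}$ is isometric (by (2)) its image is complete, hence closed, and it is invariant under both $M(A)$-actions because $S\circ \theta _{x,a}=\theta _{S(x),a}$ and $\theta _{x,a}\circ m=\theta _{x,m^{\ast }(a)}$. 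For strict density I would take an approximate unit $\{e_{i}\}$ of $A$ and show $e_{i}\cdot T\rightarrow T$ strictly for every $T\in M(X)$, noting $e_{i}\cdot T=\Phi _{A}(e_{i})\circ T\in K_{A}(X)\circ L_{A}(A,X)\subseteq K_{A}(A,X)=\iota _{X}(X)$. The two strict-seminorm terms behave differently: $(e_{i}\cdot T-T)\cdot a=\iota _{X}\big(e_{i}T(a)-T(a)\big)$ tends to $0$ because $e_{i}y\rightarrow y$ in $X$ (the left action being nondegenerate), whereas $a\cdot (e_{i}\cdot T-T)=(ae_{i}-a)\cdot T=\Phi _{A}(ae_{i}-a)\circ T$ tends to $0$ in the pro-$C^{\ast }$ seminorm by submultiplicativity together with $ae_{i}\rightarrow a$. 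I expect this asymmetry --- one side collapsing into $\iota _{X}(X)$ and controlled by the module approximate unit, the other controlled by operator-norm submultiplicativity --- to be the main technical point, together with making precise that $e_{i}\cdot T$ indeed lands back in $\iota _{X}(X)$ via the compact-operator identification.
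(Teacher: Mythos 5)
Your argument is correct, but it diverges from the paper's proof in a way worth recording. For (1) you and the paper do essentially the same thing: identify $M(X)$ with $\lim\limits_{\leftarrow \lambda }M(X_{\lambda })$, check via Lemma \ref{strict1} that the strict topology is the projective limit of the $C^{\ast }$-strict topologies, and invoke strict completeness of each $M(X_{\lambda })$ from \cite{EKQR}. For (2) the paper again works componentwise, assembling $(\iota _{X},\iota _{A})$ as an inverse limit of the $C^{\ast }$-level morphisms $(\iota _{X_{\lambda }},\iota _{A_{\lambda }})$, whereas you verify the two inner-product identities $\iota _{X}(x)^{\ast }\circ \iota _{X}(y)=\iota _{A}(\langle x,y\rangle _{A})$ and $\overline{\Phi _{A}^{-1}}(\iota _{X}(x)\circ \iota _{X}(y)^{\ast })=\iota _{A}({}_{A}\langle x,y\rangle )$ directly in $L_{A}(A,X)$; both are fine, and your route is more self-contained. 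The real difference is in (3): the paper obtains strict density by quoting the $C^{\ast }$-level density of $\iota _{X_{\lambda }}(X_{\lambda })$ in $M(X_{\lambda })$ together with the inverse-limit closure theorem \cite[Chapter III, Theorem 3.1]{M}, while you prove it directly by identifying $\iota _{X}(X)$ with $K_{A}(A,X)$ (using Lemma \ref{factor} to factor $x=y\langle y,y\rangle _{A}$, so that $\iota _{X}(x)=\theta _{y,\langle y,y\rangle _{A}^{\ast }}$ and conversely $\theta _{x,a}=\iota _{X}(xa^{\ast })$) and then showing $e_{i}\cdot T\rightarrow T$ strictly with $e_{i}\cdot T=\Phi _{A}(e_{i})\circ T\in K_{A}(X)\circ L_{A}(A,X)\subseteq K_{A}(A,X)$. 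This is in effect a left-handed version of the paper's Lemma \ref{conv} (which proves $T\cdot e_{i}\rightarrow T$ strictly, but only after the theorem), front-loaded into the density proof; it buys you an explicit description $\iota _{X}(X)=K_{A}(A,X)$ and avoids the external citation to Mallios, at the cost of having to justify that the left action of $A$ on $X$ is nondegenerate (which does hold here, since fullness and Lemma \ref{factor} applied to the left Hilbert module structure give $AX=X$) and that the two composition inclusions $K_{A}(X)\circ L_{A}(A,X)\subseteq K_{A}(A,X)$ and the submultiplicativity estimate hold, all of which you handle correctly.
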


\begin{proof}
(1) For each $\lambda \in \Lambda ,$ $M(X_{\lambda })$ has a structure of
Hilbert $M(A_{\lambda })-$ $M(A_{\lambda })\ \ C^{\ast }$-bimodule (see, 
\cite[Proposition 1.10]{EKQR}). It is easy to check that 
\begin{equation*}
\{M(A_{\lambda });M(X_{\lambda });\pi _{\lambda \mu }^{M(A)};\chi _{\lambda
\mu }^{M(X)};\lambda ,\mu \in \Lambda ,\lambda \geq \mu \},
\end{equation*}%
where $\chi _{\lambda \mu }^{M(X)}=\chi _{\lambda \mu }^{L_{A}(A,X)}$ for
all $\lambda ,\mu \in \Lambda $ with $\lambda \geq \mu $, is an inverse
system of Hilbert $C^{\ast }$-bimodules. Then {{$\lim\limits_{\leftarrow
\lambda }$}} $M(X_{\lambda })$ has a structure of Hilbert {{$%
\lim\limits_{\leftarrow \lambda }$}} $M(A_{\lambda })-${{$%
\lim\limits_{\leftarrow \lambda }$}} $M(A_{\lambda })$ pro-$C^{\ast }$%
-bimodule. Moreover, by Lemma \ref{strict1} the maps $\chi _{\lambda \mu
}^{M(X)}:M(X_{\lambda })\rightarrow M(X_{\mu })$, $\lambda ,\mu \in \Lambda
,\lambda \geq \mu $ are all strictly continuous.

Consider, the maps: 
\begin{equation*}
\Phi :M(X)\rightarrow {{\lim\limits_{\leftarrow \lambda }}}M(X_{\lambda
}),\Phi \left( T\right) =\left( \chi _{\lambda }^{M(X)}(T)\right) _{\lambda }
\end{equation*}%
and 
\begin{equation*}
\varphi :M(A)\rightarrow {{\lim\limits_{\leftarrow \lambda }}}M(A_{\lambda
}),\varphi \left( m\right) =\left( \pi _{\lambda }^{M(A)}(m)\right)
_{\lambda }.
\end{equation*}%
It is easy to check that $\left( \Phi ,\varphi \right) $ is a morphism of
Hilbert pro-$C^{\ast }$-bimodules. Moreover, $\Phi $ is bijective, and since 
$\varphi $ is a pro-$C^{\ast }$-isomorphism, $\left( \Phi ,\varphi \right) $
is an isomorphism of Hilbert pro-$C^{\ast }$-bimodules. Clearly, a net $%
\{T_{i}\}_{i\in I}$ in $M(X)$ converges strictly to $0$ in $M(X)\ $if and
only if the net $\{\Phi \left( T_{i}\right) \}_{i\in I}$ converges strictly
to $0$ in {{$\lim\limits_{\leftarrow \lambda }$}}$M(X_{\lambda })$.
Therefore, the strict topology on $M(X)\ $can be identified with the inverse
limit of the strict topologies on $M(X_{\lambda }),$ $\lambda \in \Lambda ,$
and since $M(X_{\lambda }),$ $\lambda \in \Lambda $, are complete with
respect to the strict topology \cite[Proposition 1.27]{EKQR}, $M(X)$ is
complete with respect to the strict topology.

(2) Let $\lambda \in \Lambda $. By \cite{EKQR}, $\left( \iota _{X_{\lambda
}},\iota _{A_{\lambda }}\right) :\left( X_{\lambda },A_{\lambda }\right)
\rightarrow \left( M(X_{\lambda }),M(A_{\lambda })\right) $, where $\iota
_{X_{\lambda }}\left( \sigma _{\lambda }^{X}\left( x\right) \right) \left(
\pi _{\lambda }^{A}\left( a\right) \right) =\sigma _{\lambda }^{X}\left(
xa\right) $ and $\iota _{A_{\lambda }}\left( \pi _{\lambda }^{A}\left(
b\right) \right) \pi _{\lambda }^{A}\left( a\right) =\pi _{\lambda
}^{A}\left( ba\right) $ for all $x\in X$ and $a,b\in A$, is a morphism of
Hilbert $C^{\ast }$-bimodules. Since 
\begin{equation*}
\chi _{\lambda \mu }^{M(X)}\circ \iota _{X_{\lambda }}=\iota _{X_{\mu
}}\circ \sigma _{\lambda \mu }^{X}\text{ and }\pi _{\lambda \mu
}^{M(A)}\circ \iota _{A_{\lambda }}=\iota _{A_{\mu }}\circ \pi _{\lambda \mu
}^{A}
\end{equation*}%
for all $\lambda ,\mu \in \Lambda $ with $\lambda \geq \mu $, there is a
morphism of Hilbert pro-$C^{\ast }$-bimodules 
\begin{equation*}
\left( {{\lim\limits_{\leftarrow \lambda }}}\iota _{X_{\lambda }},{{%
\lim\limits_{\leftarrow \lambda }}}\iota _{A_{\lambda }}\right) :\left( {{%
\lim\limits_{\leftarrow \lambda }}}X_{\lambda },{{\lim\limits_{\leftarrow
\lambda }}}A_{\lambda }\right) \rightarrow \left( {{\lim\limits_{\leftarrow
\lambda }M(}}X_{\lambda }),{{\lim\limits_{\leftarrow \lambda }M(}}A_{\lambda
})\right) .
\end{equation*}%
Identifying $X$ with ${{\lim\limits_{\leftarrow \lambda }}}X_{\lambda }$ and 
$A$ with ${{\lim\limits_{\leftarrow \lambda }}}A_{\lambda }$, and using $%
\left( 1\right) $, we obtain a morphism of Hilbert pro-$C^{\ast }$-bimodules 
$\left( \iota _{X},\iota _{A}\right) :\left( X,A\right) \rightarrow \left(
M(X),M(A)\right) $, where $\iota _{X}\left( x\right) \left( a\right) =xa$
and $\iota _{A}\left( b\right) \left( a\right) =ba$ for all $x\in X$ and $%
a,b\in A$. We know that $\iota _{A}$ is nondegenerate and $XA$ is dense in $%
X $, and then $\left( \iota _{X},\iota _{A}\right) $ is nondegenerate.

(3) Since, for each $\lambda \in \Lambda ,$ 
\begin{equation*}
p_{\lambda }^{M(A)}(\iota _{X}(x))=\left\Vert \iota _{X_{\lambda }}\left(
\sigma _{\lambda }^{X}\left( x\right) \right) \right\Vert _{M(X_{\lambda
})}=\left\Vert \sigma _{\lambda }^{X}\left( x\right) \right\Vert
_{X_{\lambda }}=p_{\lambda }^{A}(x)
\end{equation*}%
for all $x\in X$, $X$ can be identified with a closed $M(A)-M(A)$ pro--$%
C^{\ast }$-sub-bimodule of $M(X)$. Using $(1)-\left( 2\right) $ and \cite[%
Chapter III, Theorem 3.1]{M}, we have 
\begin{eqnarray*}
\overline{\iota _{X}\left( X\right) }^{str} &=&{{\lim\limits_{\leftarrow
\lambda }}}\overline{{\chi }_{\lambda }^{M(X)}\left( \iota _{X}\left(
X\right) \right) }^{str}={{\lim\limits_{\leftarrow \lambda }}}\overline{%
\iota _{X_{\lambda }}\left( {\sigma }_{\lambda }^{X}\left( X\right) \right) }%
^{str}={{\lim\limits_{\leftarrow \lambda }}}\overline{\iota _{X_{\lambda
}}\left( X_{\lambda }\right) }^{str} \\
&=&{{\lim\limits_{\leftarrow \lambda }M(}}X_{\lambda })=M(X),
\end{eqnarray*}%
where $\overline{Z}^{str}$ denotes the closure with respect to the strict
topology of the Hilbert sub-bimodule $Z$ of a Hilbert bimodule $Y$.
Therefore, $X$ can be identified with a closed $M(A)-M(A)$ pro--$C^{\ast }$%
-sub-bimodule of $M(X)$ which is dense in $M(X)$ with respect to the strict
topology.
\end{proof}

\begin{remark}
Let $(X,A)$ be a full Hilbert pro-$C^{\ast }$-bimodule.

\begin{enumerate}
\item A net $\{x_{i}\}_{i\in I}$ in $X$ converges strictly to $0$ if and
only if the nets $\{p_{\lambda }^{A}\left( x_{i}a\right) \}_{i\in I}$ and $%
\{p_{\lambda }^{A}\left( ax_{i}\right) \}_{i\in I}$ converge to $0$ for all $%
a\in A$ and $\lambda \in \Lambda $.

\item The morphism of Hilbert pro-$C^{\ast }$-bimodules $\left( \iota
_{X},\iota _{A}\right) :\left( X,A\right) \rightarrow \left(
M(X),M(A)\right) $ is strictly continuous.
\end{enumerate}
\end{remark}

\begin{lemma}
\label{conv}Let $(X,A)$ be a full Hilbert pro-$C^{\ast }$-bimodule, let $%
\{e_{i}\}_{i\in I}$ be an approximate unit for $A$ and $T\in M(X).$ Then the
net $\{T\cdot e_{i}\}_{i\in I}$ converges strictly to $T$.
\end{lemma}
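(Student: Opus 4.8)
The plan is to avoid working directly with the seminorms $p_{(\lambda ,a)}$ that define the strict topology on $M(X)$, and instead to pass, via Proposition \ref{bounded}, to the equivalent family $p_{(\lambda ,a,x)}(S)=p_\lambda^A(S(a))+p_\lambda(S^\ast(x))$, which interacts well with the approximate unit. The reason is exactly the conceptual obstacle of the argument: the defining seminorms would require $p_\lambda^{M(A)}\big(a\cdot(T\cdot e_i-T)\big)\to 0$, that is, norm convergence of a net of the form $S\circ \iota_A(e_i)\to S$ in the operator seminorm $p_\lambda^{M(A)}$, and this fails in general (already for $X=A$ with $S$ a multiplier and $\{e_i\}$ an increasing approximate unit). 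Proposition \ref{bounded} lets us trade this for continuity statements applied to single elements of $A$ and of $X$, at the cost of first checking that the relevant net is bounded.

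First I would verify that $\{T\cdot e_i-T\}_{i\in I}$ is bounded. Writing $T\cdot e_i=T\circ \iota_A(e_i)$ and using that $p_\lambda^{M(A)}(\cdot)=\|\chi_\lambda^{L_A(A,X)}(\cdot)\|$ is submultiplicative under composition, together with $p_\lambda^{M(A)}(\iota_A(e_i))=\|\pi_\lambda^A(e_i)\|_{A_\lambda}=p_\lambda(e_i)\le 1$, one obtains $p_\lambda^{M(A)}(T\cdot e_i)\le p_\lambda^{M(A)}(T)$, and hence $p_\lambda^{M(A)}(T\cdot e_i-T)\le 2\,p_\lambda^{M(A)}(T)$ for every $\lambda \in\Lambda$ and every $i$. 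Thus the net is bounded in the required sense.

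Next, fixing $a\in A$, $x\in X$ and $\lambda\in\Lambda$, I would estimate the two summands of $p_{(\lambda ,a,x)}(T\cdot e_i-T)$. For the first, $(T\cdot e_i)(a)=T(e_i a)$, so $(T\cdot e_i-T)(a)=T(e_i a-a)$ and $p_\lambda^A\big(T(e_i a-a)\big)\le p_\lambda^{M(A)}(T)\,p_\lambda(e_i a-a)\to 0$, since $e_i a\to a$. For the second, because $e_i$ is positive we have $(T\cdot e_i)^\ast=\iota_A(e_i)^\ast\circ T^\ast=\iota_A(e_i)\circ T^\ast$, whence $(T\cdot e_i-T)^\ast(x)=e_i\,T^\ast(x)-T^\ast(x)$; as $T^\ast(x)\in A$ and $\{e_i\}$ is an approximate unit, this tends to $0$ in $A$. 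Therefore $p_{(\lambda ,a,x)}(T\cdot e_i-T)\to 0$ for all $a\in A$, $x\in X$ and $\lambda\in\Lambda$.

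Finally, since the net $\{T\cdot e_i-T\}$ is bounded and $p_{(\lambda ,a,x)}(T\cdot e_i-T)\to 0$ for all $a$, $x$, $\lambda$, Proposition \ref{bounded}(2) yields that $\{T\cdot e_i-T\}$ converges strictly to $0$, i.e.\ $\{T\cdot e_i\}$ converges strictly to $T$. As indicated above, the only genuine difficulty is recognizing that the defining strict seminorms do not settle the matter directly and that one must route the argument through the bounded-net criterion of Proposition \ref{bounded}, after which everything reduces to the defining properties of the approximate unit and the continuity of $T$ and $T^\ast$.
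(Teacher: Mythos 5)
Your proof is correct and takes essentially the same route as the paper's: both establish boundedness of the net via $p_{\lambda }^{M(A)}(T\cdot e_{i})\leq p_{\lambda }^{M(A)}(T)p_{\lambda }(e_{i})\leq p_{\lambda }^{M(A)}(T)$, then verify the two pointwise estimates $p_{\lambda }^{A}\left( T(e_{i}a-a)\right) \leq p_{\lambda ,L_{A}(A,X)}(T)\,p_{\lambda }(e_{i}a-a)$ and $\left( T\cdot e_{i}-T\right) ^{\ast }(x)=e_{i}T^{\ast }(x)-T^{\ast }(x)$, and conclude by the bounded-net criterion of Proposition \ref{bounded}(2). The only difference is your explanatory aside motivating the detour, which is harmless and not part of the argument.
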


\begin{proof}
The net $\{T\cdot e_{i}\}_{i\in I}$ is bounded, since 
\begin{equation*}
p_{\lambda }^{M(A)}\left( T\cdot e_{i}\right) \leq p_{\lambda }^{M(A)}\left(
T\right) p_{\lambda ,L_{A}(A)}\left( e_{i}\right) =p_{\lambda }^{M(A)}\left(
T\right) p_{\lambda }\left( e_{i}\right) \leq p_{\lambda }^{M(A)}\left(
T\right)
\end{equation*}%
for all $i\in I$ and for $\lambda \in \Lambda .$ From 
\begin{equation*}
p_{\lambda }^{M(A)}\left( \left( T\cdot e_{i}-T\right) \left( a\right)
\right) =p_{\lambda }^{A}\left( T\left( e_{i}a-a\right) \right) \leq
p_{\lambda ,L_{A}\left( A,X\right) }\left( T\right) p_{\lambda }\left(
e_{i}a-a\right)
\end{equation*}%
for all $a\in A,$ $i\in I,$ $\lambda \in \Lambda $, and%
\begin{equation*}
p_{\lambda }\left( \left( \left( T\cdot e_{i}\right) ^{\ast }-T^{\ast
}\right) \left( x\right) \right) =p_{\lambda }\left( e_{i}T^{\ast
}(x)-T^{\ast }(x)\right)
\end{equation*}%
for all $x\in X,$ $i\in I,$ $\lambda \in \Lambda $, and Proposition \ref%
{bounded} , and taking into account that $\{e_{i}\}_{i\in I}$ is an
approximate unit for $A$, we conclude that $\{T\cdot e_{i}\}_{i\in I}\ $%
converges strictly to $T$.
\end{proof}

In the following theorem we show that any nondegenerate morphism of pro-$%
C^{\ast }$-bimodules is strictly continuous.

\begin{theorem}
\label{strict}Let $\left( X,A\right) $ and $\left( Y,B\right) $ be two full
Hilbert pro-$C^{\ast }$-bimodules and let $\left( \Phi ,\varphi \right) $ be
a nondegenerate \textit{morphism of Hilbert pro-}$C^{\ast }$\textit{%
-bimodules from }$\left( X,A\right) $ to $\left( M(Y),M(B)\right) $. Then $%
\left( \Phi ,\varphi \right) $ extends to a unique nondegenerate morphism of 
\textit{Hilbert pro-}$C^{\ast }$\textit{-bimodules }$\left( \overline{\Phi },%
\overline{\varphi }\right) $ from $\left( M(X),M(A)\right) $ to $\left(
M(Y),M(B)\right) $. Moreover, $\overline{\Phi }$ is strictly continuous.
\end{theorem}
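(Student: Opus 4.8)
The plan is to construct $\overline{\varphi}$ and $\overline{\Phi}$ in turn and then check compatibility. First, since $(\Phi,\varphi)$ is nondegenerate, $\varphi:A\rightarrow M(B)$ is a nondegenerate pro-$C^{\ast}$-morphism, so by the extension result recalled in Section 2 it extends uniquely to a pro-$C^{\ast}$-morphism $\overline{\varphi}:M(A)\rightarrow M(B)$ which is strictly continuous on bounded sets; its nondegeneracy is immediate from $[\overline{\varphi}(M(A))B]\supseteq[\varphi(A)B]=B$. To define $\overline{\Phi}$, fix an approximate unit $\{e_{i}\}_{i\in I}$ for $A$. Identifying $X$ with $\iota_{X}(X)\subseteq M(X)$ as in Theorem \ref{multiplier}, for $T\in M(X)$ one has $T\cdot e_{i}=\iota_{X}(T(e_{i}))\in X$, so $\Phi(T\cdot e_{i})$ is defined, and I would set $\overline{\Phi}(T)$ to be the strict limit of $\{\Phi(T\cdot e_{i})\}_{i}$ in $M(Y)$.

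Establishing that this strict limit exists is the heart of the argument and is where I expect the main difficulty. The idea is to test against elements $b=\varphi(a)b'$ with $a\in A$, $b'\in B$, which are total in $B$ because $\varphi$ is nondegenerate. Using the morphism identity $\Phi(xa)=\Phi(x)\varphi(a)$, the right action collapses to
\[
\Phi(T\cdot e_{i})\cdot b=\bigl(\Phi(T\cdot e_{i})\cdot\varphi(a)\bigr)\cdot b'=\Phi\bigl((T\cdot e_{i})\cdot a\bigr)\cdot b'=\Phi\bigl(T\cdot(e_{i}a)\bigr)\cdot b',
\]
and since $e_{i}a\rightarrow a$ we get $T\cdot(e_{i}a)\rightarrow T\cdot a=\iota_{X}(T(a))$ in the topology of $M(X)$, whence by continuity of $\Phi$ the net $\{\Phi(T\cdot e_{i})\cdot b\}_{i}$ converges for every such $b$. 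Combined with the uniform bound $q_{\delta}^{M(B)}(\Phi(T\cdot e_{i}))^{2}=q_{\delta}^{M(B)}\bigl(\varphi(\langle T(e_{i}),T(e_{i})\rangle_{A})\bigr)\leq p_{\lambda}^{M(A)}(T)^{2}$, this convergence extends to arbitrary $b\in B$, and the left action is treated symmetrically. Thus $\{\Phi(T\cdot e_{i})\}_{i}$ is strictly Cauchy and bounded, and as $M(Y)$ is strictly complete by Theorem \ref{multiplier}(1) the limit exists. The strict topology is Hausdorff, so $\overline{\Phi}(T)$ is independent of the chosen approximate unit, and for $x\in X$ the relation $xe_{i}\rightarrow x$ gives $\overline{\Phi}(\iota_{X}(x))=\Phi(x)$, i.e. $\overline{\Phi}$ extends $\Phi$.

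It then remains to verify that $(\overline{\Phi},\overline{\varphi})$ is a nondegenerate, strictly continuous morphism. Since relation $(3)$ implies $(1)$ and relation $(4)$ implies $(2)$, it is enough to check the two inner-product identities, which I would obtain by passing to the strict limit in $\langle\Phi(T_{1}\cdot e_{i}),\Phi(T_{2}\cdot e_{i})\rangle_{M(B)}=\overline{\varphi}\bigl(\langle T_{1}\cdot e_{i},T_{2}\cdot e_{i}\rangle_{M(A)}\bigr)$ and its left-handed analogue, using that $T_{j}\cdot e_{i}\rightarrow T_{j}$ strictly (Lemma \ref{conv}), that the inner products are strictly continuous on bounded sets, and that $\overline{\varphi}$ is strictly continuous on bounded sets. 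The estimate $q_{\delta}^{M(B)}(\overline{\Phi}(T))\leq p_{\lambda}^{M(A)}(T)$ (from relation $(3)$ and continuity of $\overline{\varphi}$) shows that $\overline{\Phi}$ sends bounded sets to bounded sets; inserting this into the factorization computation above, applied to $U_{\nu}=T_{\nu}-T\rightarrow 0$ strictly, yields strict continuity of $\overline{\Phi}$ on bounded sets, while nondegeneracy follows from $[\overline{\Phi}(M(X))B]\supseteq[\Phi(X)B]=Y$. Finally, uniqueness rests on strict density: by Lemma \ref{conv} every $T\in M(X)$ is the strict limit of the bounded net $\{T\cdot e_{i}\}\subseteq X$, so any strictly continuous extension of $\Phi$ is determined on all of $M(X)$, and $\overline{\varphi}$ is already the unique extension of $\varphi$.
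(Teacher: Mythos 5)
Your proposal is correct in outline but takes a genuinely different route from the paper. The paper never constructs $\overline{\Phi}$ directly: for each pair $(\lambda ,\delta )$ it factors $(\Phi ,\varphi )$ through the quotients to obtain a nondegenerate morphism $\left( \Phi _{(\lambda ,\delta )},\varphi _{(\lambda ,\delta )}\right) :(X_{\lambda },A_{\lambda })\rightarrow (M(Y_{\delta }),M(B_{\delta }))$ of Hilbert $C^{\ast }$-bimodules, invokes the known $C^{\ast }$-level extension theorem \cite[Theorem 1.30]{EKQR} to extend each of these to $(M(X_{\lambda }),M(A_{\lambda }))$, and then glues the resulting family into an inverse limit using the strict continuity of the connecting maps (Lemma \ref{strict1}) and the strict density of $X$ in $M(X)$; only the uniqueness step uses the approximate unit. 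You instead re-prove the extension theorem directly in the pro-$C^{\ast }$ setting via the formula $\overline{\Phi }(T)=\operatorname{str-}\lim_{i}\Phi (T(e_{i}))$ and the strict completeness of $M(Y)$ from Theorem \ref{multiplier}(1); this is more self-contained (no appeal to the $C^{\ast }$ result) at the price of verifying the strict Cauchy estimates by hand. Two of those details deserve more than you give them. First, the left half of the Cauchy argument is not literally ``symmetric'': $b^{\prime }\varphi (a)\cdot \Phi (T(e_{i}))=b^{\prime }\cdot \Phi (a\,T(e_{i}))$ requires that $\{a\,T(e_{i})\}_{i}$ converge in $X$, which does not follow from continuity of $T$ the way $T(e_{i}a)\rightarrow T(a)$ does; it does follow from the left part of the strict convergence $T\cdot e_{i}\rightarrow T$ in Lemma \ref{conv} together with the fact that $\iota _{X}(X)$ is isometrically embedded and closed in $M(X)$, so you should say so. Second, the ``strict continuity of the inner products on bounded sets'' that you use to pass to the limit in the identities $(3)$ and $(4)$ is only recorded in the paper for the diagonal case and needs a short polarization or factorization argument. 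With these points supplied, your construction, your nondegeneracy computation and your uniqueness argument via Lemma \ref{conv} all match the paper's conclusions, and the uniqueness step in particular coincides with the paper's own.
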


\begin{proof}
For each $\delta \in \Delta $, there is $\lambda \in \Lambda $ such that $%
q_{\delta ,M(B)}\left( \varphi \left( a\right) \right) \leq p_{\lambda
}\left( a\right) $ for all $a\in A$ and $q_{\delta }^{M(B)}\left( \Phi
\left( x\right) \right) \leq p_{\lambda }^{A}\left( x\right) $ for all $x\in
X$. So there exists a $C^{\ast }$-morphism $\varphi _{\left( \lambda ,\delta
\right) }:A_{\lambda }\rightarrow M(B_{\delta })$ such that $\varphi
_{\left( \lambda ,\delta \right) }\circ \pi _{\lambda }^{A}=\pi _{\delta
}^{M(B)}\circ \varphi $ and a linear map $\Phi _{\left( \lambda ,\delta
\right) }:X_{\lambda }\rightarrow M(Y_{\delta })$ such that $\Phi _{\left(
\lambda ,\delta \right) }\circ \sigma _{\lambda }^{X}=\chi _{\delta
}^{M(Y)}\circ \Phi $. It is easy to check that $\left( \Phi _{\left( \lambda
,\delta \right) },\varphi _{\left( \lambda ,\delta \right) }\right) $ is a
morphism of Hilbert $C^{\ast }$-bimodules from $\left( X_{\lambda
},A_{\lambda }\right) \ $to $\left( M(Y_{\delta }),M(B_{\delta })\right) $.
Moreover, $\left( \Phi _{\left( \lambda ,\delta \right) },\varphi _{\left(
\lambda ,\delta \right) }\right) $ is nondegenerate, since 
\begin{equation*}
\left[ \varphi _{\left( \lambda ,\delta \right) }\left( A_{\lambda }\right)
B_{\delta }\right] =\left[ \varphi _{\left( \lambda ,\delta \right) }\left(
\pi _{\lambda }^{A}\left( A\right) \right) B_{\delta }\right] =\left[ \pi
_{\delta }^{M(B)}\left( \varphi \left( A\right) B\right) \right] =\left[ \pi
_{\delta }^{M(B)}\left( B\right) \right] =B_{\delta }
\end{equation*}%
and 
\begin{eqnarray*}
\left[ \Phi _{\left( \lambda ,\delta \right) }\left( X_{\lambda }\right)
B_{\delta }\right] &=&\left[ \Phi _{\left( \lambda ,\delta \right) }\left(
\sigma _{\lambda }^{X}\left( X\right) \right) \pi _{\delta }^{M(B)}\left(
B\right) \right] =\left[ \chi _{\delta }^{M(Y)}\left( \Phi \left( X\right)
B\right) \right] \\
&=&\left[ \sigma _{\delta }^{Y}\left( Y\right) \right] =Y_{\delta }.
\end{eqnarray*}

Then, by \cite[Theorem 1.30]{EKQR}, $\Phi _{\left( \lambda ,\delta \right) }$
is strictly continuous and $\left( \Phi _{\left( \lambda ,\delta \right)
},\varphi _{\left( \lambda ,\delta \right) }\right) $ extends to a unique
nondegenerate morphism of Hilbert $C^{\ast }$-modules $\left( \overline{\Phi
_{\left( \lambda ,\delta \right) }},\overline{\varphi _{\left( \lambda
,\delta \right) }}\right) $ \ from $\left( M(X_{\lambda }),M(A_{\lambda
})\right) \ $to $\left( M(Y_{\delta }),M(B_{\delta })\right) $. Let $%
\overline{\Phi _{\delta }}=\overline{\Phi _{\left( \lambda ,\delta \right) }}%
\circ \chi _{\lambda }^{M(X)}$ and $\overline{\varphi _{\delta }}=\overline{%
\varphi _{\left( \lambda ,\delta \right) }}\circ \pi _{\lambda }^{M(A)}$.
Clearly, $\left( \overline{\Phi _{\delta }},\overline{\varphi _{\delta }}%
\right) $ is a morphism of pro-$C^{\ast }$-bimodules from $\left(
M(X),M(A)\right) $ to $\left( M(Y_{\delta }),M(B_{\delta })\right) $.
Moreover, $\overline{\Phi _{\delta }}$ is strictly continuous, since $\chi
_{\lambda }^{M(X)}$ is strictly continuos (see Lemma \ref{strict1}).

Let $\delta _{1},\delta _{2}\in \Delta \ $with $\delta _{1}\geq \delta _{2}$%
. We have 
\begin{eqnarray*}
\overline{\Phi _{\delta _{1}}}\left( \iota _{X}(x)\right) &=&\left( 
\overline{\Phi _{\left( \lambda _{1},\delta _{1}\right) }}\circ \chi
_{\lambda }^{M(X)}\right) \left( \iota _{X}(x)\right) =\overline{\Phi
_{\left( \lambda _{1},\delta _{1}\right) }}\left( \iota _{X_{\lambda
_{1}}}(\sigma _{\lambda _{1}}^{X}\left( x\right) )\right) \\
&=&\Phi _{\left( \lambda _{1},\delta _{1}\right) }(\sigma _{\lambda
_{1}}^{X}\left( x\right) )=\chi _{\delta _{1}}^{M(Y)}\left( \Phi \left(
x\right) \right)
\end{eqnarray*}%
for some $\lambda _{1}\in \Lambda $ and for all $x\in X$. Then 
\begin{equation*}
\left( \chi _{\delta _{1}\delta _{2}}^{M(Y)}\circ \overline{\Phi _{\delta
_{1}}}\right) \left( \iota _{X}(x)\right) =\chi _{\delta _{1}\delta
_{2}}^{M(Y)}\left( \chi _{\delta _{1}}^{M(Y)}\left( \Phi \left( x\right)
\right) \right) =\chi _{\delta _{2}}^{M(Y)}\left( \Phi \left( x\right)
\right) =\overline{\Phi _{\delta _{2}}}\left( \iota _{X}(x)\right)
\end{equation*}%
for all $x\in X$. From these relations and taking into account that $\chi
_{\delta _{1}\delta _{2}}^{M(Y)},$ $\overline{\Phi _{\delta _{1}}},$ $%
\overline{\Phi _{\delta _{2}}}$ are strictly continuous and $X$ is dense in $%
M(X)\ $with respect to the strict topology, we conclude that $\chi _{\delta
_{1}\delta _{2}}^{M(Y)}\circ \overline{\Phi _{\delta _{1}}}=\overline{\Phi
_{\delta _{2}}}$. Therefore there is a strictly continuous linear map $%
\overline{\Phi }:M(X)\rightarrow M(Y)\ $such that $\chi _{\delta
}^{M(Y)}\circ \overline{\Phi }=\overline{\Phi _{\delta }}$ for all $\delta
\in \Delta $, and $\overline{\Phi }\circ \iota _{X}=\Phi $.

By \cite[Proposition 3.15]{P}, there is a pro-$C^{\ast }$-morphism $%
\overline{\varphi }:M(A)\rightarrow M(B)$ such that $\pi _{\delta
}^{M(B)}\circ \overline{\varphi }=\overline{\varphi _{\delta }}$ for all $%
\delta \in \Delta $ and $\overline{\varphi }\circ \iota _{A}=\varphi .$

It is easy to check that $\left( \overline{\Phi },\overline{\varphi }\right) 
$ is a morphism of Hilbert pro-$C^{\ast }$-bimodules. Since $\overline{%
\varphi }$ is nondegenerate \cite[Proposition 6.1.4]{J1} and 
\begin{eqnarray*}
\left[ \overline{\Phi }\left( M(X)\right) B\right] &=&\left[ \overline{\Phi }%
\left( M(X)\right) \varphi \left( A\right) B\right] =\left[ \overline{\Phi }%
\left( M(X)A)\right) B\right] \\
&=&\left[ \Phi \left( X)\right) B\right] =Y
\end{eqnarray*}%
the morphism of Hilbert pro-$C^{\ast }$-bimodule $\left( \overline{\Phi }\ ,%
\overline{\varphi }\right) \ $is nondegenerate.

Suppose that there is another morphism of Hilbert pro-$C^{\ast }$-bimodules $%
\left( \Phi _{1},\varphi _{1}\right) :\left( M(X),M(A)\right) \rightarrow
\left( M(Y),M(B)\right) $ such that $\Phi _{1}\left( \iota _{X}(x)\right)
=\Phi \left( x\right) $ for all $x\in X$ and $\varphi _{1}\left( \iota
_{A}\left( a\right) \right) =\varphi \left( a\right) $ for all $a\in A$. Let 
$\{e_{i}\}_{i\in I}$ be a approximate unit for $A$. Then, by Lemma \ref{conv}
for each $T\in M(X)$ and $m\in M(A)$, the nets $\{T\cdot e_{i}\}_{i\in I}$
and $\{m\cdot e_{i}\}_{i\in I}$ are strictly convergent to $T$ respectively $%
m$. Thus we have 
\begin{equation*}
\Phi _{1}\left( T\right) =\text{str-}\lim\limits_{i}\Phi _{1}\left( T\cdot
e_{i}\right) =\text{str-}\lim\limits_{i}\Phi \left( T\cdot e_{i}\right) =%
\overline{\Phi }\left( T\right)
\end{equation*}%
for all $T\in M(X)$ and 
\begin{equation*}
\varphi _{1}\left( m\right) =\text{str-}\lim\limits_{i}\varphi _{1}\left(
m\cdot e_{i}\right) =\text{str-}\lim\limits_{i}\varphi \left( m\cdot
e_{i}\right) =\overline{\varphi }\left( m\right)
\end{equation*}%
for all $m\in M(A)$.
\end{proof}

Let $X$ be a Hilbert $A-A$ pro-$C^{\ast }$-bimodule. For a closed two sided
ideal $\mathcal{I}$ of $A$ we put $\mathcal{I}X=$span$\{ax/a\in \mathcal{I}%
,x\in X\}$ and $X\mathcal{I}=$span$\{xa/a\in \mathcal{I},x\in X\}$. By \cite[%
Lemma 3.7]{JZ2}, $\mathcal{I}X$ and $X\mathcal{I}$ are closed Hilbert pro-$%
C^{\ast }$-sub-bimodules of $X$.

\begin{definition}
Let $(X,A)$ and \thinspace $(Y,C)$ be two Hilbert pro-$C^{\ast }$-bimodules.
We say that $(Y,C)$ is an extension of $(X,A)$ if the following conditions
are satisfied:

\begin{enumerate}
\item $C$ contains $A$ as an ideal;

\item there exists a morphism $\left( \varphi _{X},\varphi _{A}\right) $ of
Hilbert pro-$C^{\ast }$-bimodules from $(X,A)$ to $\,(Y,C),$ such that $%
\varphi _{A}:A\rightarrow C$ is just the inclusion map;

\item $\varphi _{X}\left( X\right) =\varphi _{A}\left( A\right) Y=Y\varphi
_{A}\left( A\right) $.
\end{enumerate}
\end{definition}

\begin{remark}
\label{extension}If $(Y,C)$ is an extension of $(X,A)$, and if the topology
on $C$ is given by the family of $C^{\ast }$-seminorms $\{p_{\lambda
};\lambda \in \Lambda \}$, then the topology on $A$ is given by $%
\{p_{\lambda }|_{A};\lambda \in \Lambda \}$, and $p_{\lambda }\left( \varphi
_{A}\left( a\right) \right) =p_{\lambda }\left( a\right) $ for all $a\in A$
and for all $\lambda \in \Lambda .$ Therefore, $p_{\lambda }^{C}\left(
\varphi _{X}\left( x\right) \right) =p_{\lambda }^{A}\left( x\right) $ for
all $x\in X$ and for all $\lambda \in \Lambda ,$ and so, for each $\lambda
\in \Lambda $, there is a linear map $\varphi _{X_{\lambda }}:X_{\lambda
}\rightarrow Y_{\lambda }$ such that $\sigma _{\lambda }^{Y}\circ \varphi
_{X}=\varphi _{X_{\lambda }}\circ \sigma _{\lambda }^{X}$. Then $\varphi
_{X}=\lim\limits_{\leftarrow \lambda }\varphi _{X_{\lambda }}$, and for each 
$\lambda \in \Lambda ,$ $(Y_{\lambda },C_{\lambda })$ is an extension of $%
(X_{\lambda },A_{\lambda })$ via the morphism $\left( \varphi _{X_{\lambda
}},\varphi _{A_{\lambda }}\right) $, where $\varphi _{A_{\lambda }}$ is the
inclusion of $A_{\lambda }$ into $C_{\lambda }$.
\end{remark}

In the following proposition, we show that $\left( M(X),M(A)\right) $ is a
maximal extension of $(X,A)$ in the sense that if $(Y,C)$ is another
extension of $(X,A)$ via a morphism $(\psi _{X},\psi _{A}),$ then there is a
morphism of Hilbert pro-$C^{\ast }$-bimodules $(\vartheta _{Y},\vartheta
_{C}):(Y,C)\rightarrow (M(X),M(A))$ such that $\vartheta _{Y}\circ \psi
_{X}=\iota _{X}$ and $\vartheta _{C}\circ \psi _{A}=\iota _{A}$ ( see, for
the case of Hilbert $C^{\ast }$-modules, \cite{BG1,BG2}).

\begin{proposition}
Let $X$ be a full Hilbert pro-$C^{\ast }$-bimodule over $A$. Then $\left(
M(X),M(A)\right) $ is a maximal extension of $(X,A)$.
\end{proposition}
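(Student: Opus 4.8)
The plan is to prove the statement in two stages: first that $\left( M(X),M(A)\right) $ is genuinely an extension of $(X,A)$ in the sense of the preceding definition, and then that it is maximal among all such extensions.

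For the first stage, recall from the Preliminaries that $A$ is a closed bilateral ideal of $M(A)$, so condition (1) of the definition holds, and by Theorem \ref{multiplier}(2) the pair $\left( \iota _{X},\iota _{A}\right) $ is a morphism of Hilbert pro-$C^{\ast }$-bimodules with $\iota _{A}:A\rightarrow M(A)$ the canonical inclusion, which is condition (2). The only point requiring attention is condition (3), namely $\iota _{X}(X)=\iota _{A}(A)\cdot M(X)=M(X)\cdot \iota _{A}(A)$. For any $T\in M(X)=L_{A}(A,X)$ and $a\in A$, right $A$-linearity of $T$ gives $(T\cdot a)(b)=T(ab)=T(a)b=\iota _{X}(T(a))(b)$, so $T\cdot a=\iota _{X}(T(a))\in \iota _{X}(X)$; conversely, writing $x=y\left\langle y,y\right\rangle _{A}$ as in Lemma \ref{factor} yields $\iota _{X}(x)=\iota _{X}(y)\cdot \left\langle y,y\right\rangle _{A}$, whence $\iota _{X}(X)=M(X)\cdot \iota _{A}(A)$. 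The equality $\iota _{X}(X)=\iota _{A}(A)\cdot M(X)$ follows symmetrically from the left-module factorization $x={_{A}\left\langle y,y\right\rangle }\,y$.

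For maximality, let $(Y,C)$ be any extension of $(X,A)$ via a morphism $\left( \psi _{X},\psi _{A}\right) $, so $\psi _{A}$ is the inclusion of $A$ as an ideal of $C$ and $\psi _{X}(X)=AY=YA$. Since $\psi _{A}$ is injective, $\psi _{X}$ is injective (injectivity of $\varphi $ forces injectivity of $\Phi $, as observed just after the definition of a morphism). I would then define $\vartheta _{C}:C\rightarrow M(A)$ by sending $c$ to the adjointable left-multiplication operator $a\mapsto ca$ on $A$, which lands in $L_{A}(A)=M(A)$ because $A$ is an ideal of $C$, and $\vartheta _{Y}:Y\rightarrow M(X)=L_{A}(A,X)$ by $\vartheta _{Y}(y)(a)=\psi _{X}^{-1}(ya)$, which makes sense since $ya\in YA=\psi _{X}(X)$ and $\psi _{X}$ is injective. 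One checks that $\vartheta _{Y}(y)$ is adjointable with $\vartheta _{Y}(y)^{\ast }(x)=\left\langle y,\psi _{X}(x)\right\rangle _{C}$; this element lies in $A$ because $\psi _{X}(x)\in YA$ and $\left\langle y,Y\right\rangle _{C}A\subseteq CA\subseteq A$, so indeed $\vartheta _{Y}(y)\in M(X)$. Continuity of $\vartheta _{C}$ and $\vartheta _{Y}$ follows from $p_{\lambda }\!\mid _{A}=p_{\lambda }$ and $p_{\lambda }^{Y}(\psi _{X}(x))=p_{\lambda }^{A}(x)$ (Remark \ref{extension}) together with the standard Hilbert-module inequalities $p_{\lambda }(ca)\leq p_{\lambda }(c)p_{\lambda }(a)$ and $p_{\lambda }^{Y}(ya)\leq p_{\lambda }^{Y}(y)p_{\lambda }(a)$.

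It then remains to verify that $\left( \vartheta _{Y},\vartheta _{C}\right) $ satisfies the four axioms of a morphism of Hilbert pro-$C^{\ast }$-bimodules and that $\vartheta _{Y}\circ \psi _{X}=\iota _{X}$, $\vartheta _{C}\circ \psi _{A}=\iota _{A}$; the latter are immediate substitutions, e.g. $\vartheta _{Y}(\psi _{X}(x))(a)=\psi _{X}^{-1}(\psi _{X}(x)a)=\psi _{X}^{-1}(\psi _{X}(xa))=xa=\iota _{X}(x)(a)$, while the inner-product axioms reduce to the identity $\vartheta _{Y}(y)^{\ast }\circ \vartheta _{Y}(y^{\prime })=\vartheta _{C}(\left\langle y,y^{\prime }\right\rangle _{C})$ obtained from the formula for the adjoint. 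I expect the main obstacle to be precisely the step showing that $\vartheta _{Y}(y)$ is adjointable and that its adjoint takes values in $A$ rather than merely in $C$; this is where the extension hypotheses $\psi _{X}(X)=AY=YA$ and $A$ being an ideal of $C$ are indispensable, and once this is secured the remaining identities are routine computations using that $(\psi _{X},\psi _{A})$ already respects the inner products and the bimodule actions. Alternatively, one may reduce everything to the $C^{\ast }$-level: by Remark \ref{extension} each $(Y_{\lambda },C_{\lambda })$ is an extension of $(X_{\lambda },A_{\lambda })$, so one applies the known maximality of $(M(X_{\lambda }),M(A_{\lambda }))$ for Hilbert $C^{\ast }$-bimodules \cite{BG1,BG2} and assembles the resulting level maps into an inverse limit after checking their compatibility with the connecting morphisms.
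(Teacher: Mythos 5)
Your overall strategy is sound and in fact more direct than the paper's: the authors prove both halves by reducing to the $C^{\ast }$-level (citing Robertson's Corollary 3.3 and Proposition 3.4 of \cite{R} for each $(X_{\lambda },A_{\lambda })$ and then assembling inverse systems), whereas you work with explicit formulas on $M(X)=L_{A}(A,X)$ itself; your formulas $\vartheta _{C}(c)(a)=ca$ and $\vartheta _{Y}(y)(a)=\psi _{X}^{-1}(ya)$ are exactly the maps the paper obtains levelwise, so the maximality half of your argument is essentially the paper's argument done globally, and your closing remark about reducing to the $C^{\ast }$-level is precisely the route the paper takes.

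There is, however, one concrete gap in your verification of condition (3) of the extension. The identity $(T\cdot a)(b)=T(ab)=T(a)b$ works because $T\in L_{A}(A,X)$ is by definition a morphism of \emph{right} $A$-modules and the right action is $T\cdot a=T\circ \iota _{A}(a)$; there is no symmetric identity for the left action, which is $a\cdot T=\Phi _{A}(a)\circ T$, because $T$ carries no left $A$-linearity. The left-module factorization $x={}_{A}\langle y,y\rangle y$ only gives the inclusion $\iota _{X}(X)\subseteq \iota _{A}(A)\cdot M(X)$; the reverse inclusion $a\cdot T\in \iota _{X}(X)$ is the nontrivial one and is not "symmetric" to the right-hand case. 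To prove it you must use fullness: $\Phi _{A}(a)\in K_{A}(X)$, and for a rank-one operator one computes $(\theta _{u,v}\circ T)(c)=u\langle v,T(c)\rangle _{A}=u\,T^{\ast }(v)^{\ast }c=\iota _{X}\bigl(u\,T^{\ast }(v)^{\ast }\bigr)(c)$, so $K_{A}(X)\circ M(X)\subseteq \iota _{X}(X)$; since $\iota _{X}(X)$ is closed in $M(X)$ (the seminorms agree by Theorem \ref{multiplier}(3)) and $\Phi _{A}(a)$ is a limit of finite sums of $\theta _{u,v}$'s, it follows that $a\cdot T\in \iota _{X}(X)$. (Alternatively, do as the paper does and quote \cite[Corollary 3.3]{R} at each level $\lambda $.) With this repair, and with the routine verification of the left inner-product axiom for $(\vartheta _{Y},\vartheta _{C})$ that you deferred, your proof is complete.
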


\begin{proof}
Let $(\iota _{X},\,\iota _{A})$ be the morphism of Theorem \ref{multiplier}%
(2) between $(X,A)$ and $(M(X),M(A))$, where $\iota _{X}(x)(a)=xa,\,\,\iota
_{A}(a)(b)=ab,$ for $x\in X,\,a,b\,\in A$. From \cite[Corollary 3.3]{R} we
have that for every $\lambda \in \Lambda ,$ $M(X_{\lambda })\iota
_{A_{\lambda }}(A_{\lambda })=\iota _{X_{\lambda }}(X_{\lambda })=\iota
_{A_{\lambda }}(A_{\lambda })M(X_{\lambda })$. Therefore, since from Theorem %
\ref{multiplier}, we have that $M(X)={{\lim\limits_{\leftarrow \lambda }}}%
M(X_{\lambda }),\,\iota _{X}={{\lim\limits_{\leftarrow \lambda }}}\iota
(X_{\lambda }),\,\iota _{A}={{\lim\limits_{\leftarrow \lambda }}}\iota
(A_{\lambda }),$ and since both $\iota _{A}(A)M(X),M(X)\iota _{A}(A)$ and $%
\iota _{X}(X)$ are closed submodules of $M(X)$, we deduce that $\iota
_{A}(A)M(X)=\iota _{X}(X)=M(X)\iota _{A}(A)$. Hence $(M(X),M(A))$ is an
extension of $(X,A)$.

To show that $(M(X),M(A))$ is a maximal extension, let $(Y,C)$ be another
extension of $(X,A)$ via a morphism $(\psi _{X},\psi _{A})$. Then, by Remark %
\ref{extension}, $\psi _{X}=\lim\limits_{\leftarrow \lambda }\psi
_{X_{\lambda }},\psi _{A}=\lim\limits_{\leftarrow \lambda }\psi _{A_{\lambda
}},$ and for each $\lambda \in \Lambda ,$ $(Y_{\lambda },C_{\lambda })$ is
an extension of $(X_{\lambda },A_{\lambda })$ via the morphism $\left( \psi
_{X_{\lambda }},\psi _{A_{\lambda }}\right) $. By \cite[Proposition 3.4]{R},
there exists a unique morphism $(\vartheta _{Y_{\lambda }},\vartheta
_{C_{\lambda }}):(Y_{\lambda },C_{\lambda })\rightarrow (M(X_{\lambda
}),M(A_{\lambda }))$ such that $\vartheta _{Y_{\lambda }}\circ \psi
_{X_{\lambda }}=\iota _{X_{\lambda }}$ and $\vartheta _{C_{\lambda }}\circ
\psi _{A_{\lambda }}=\,\iota _{A_{\lambda }}$. Moreover, $\ \ $%
\begin{equation*}
\vartheta _{Y_{\lambda }}\left( \sigma _{\lambda }^{Y}\left( y\right)
\right) \left( \pi _{\lambda }^{A}\left( a\right) \right) =\psi _{X_{\lambda
}}^{-1}\left( \sigma _{\lambda }^{Y}\left( y\right) \psi _{A_{\lambda
}}\left( \pi _{\lambda }^{A}\left( a\right) \right) \right)
\end{equation*}%
and\ \ \ \ \ \ 
\begin{equation*}
\vartheta _{C_{\lambda }}\left( \pi _{\lambda }^{C}\left( c\right) \right)
\left( \pi _{\lambda }^{A}\left( a\right) \right) =\psi _{A_{\lambda
}}^{-1}\left( \pi _{\lambda }^{C}\left( c\right) \psi _{A_{\lambda }}\left(
\pi _{\lambda }^{A}\left( a\right) \right) \right)
\end{equation*}%
for all $a\in A$, for all $c\in C$ and for all $y\in Y$. It is easy to check
that $\left( \vartheta _{Y_{\lambda }}\right) _{\lambda }$ is an inverse
system of linear maps, $\left( \vartheta _{C_{\lambda }}\right) _{\lambda }$
is an inverse system of $C^{\ast }$-morphisms, and $(\vartheta
_{Y},\vartheta _{C}):(Y,C)\rightarrow (M(X),M(A))$, where $\vartheta
_{Y}=\lim\limits_{\leftarrow \lambda }\vartheta _{Y_{\lambda }}$ and $%
\vartheta _{C}=\lim\limits_{\leftarrow \lambda }\vartheta _{C_{\lambda }}$,
is a morphism of Hilbert pro-$C^{\ast }$-bimodules such that $\vartheta
_{Y}\circ \psi _{X}=\iota _{X}$ and $\vartheta _{C}\circ \psi _{A}=\,\iota
_{A}$.
\end{proof}

\section{Crossed products by Hilbert pro-$C^{\ast }$-modules}

\textit{A covariant representation }of a Hilbert pro-$C^{\ast }$-bimodule $%
\left( X,A\right) $ on a pro-$C^{\ast }$-algebra $B$ is a morphism of
Hilbert pro-$C^{\ast }$-bimodules from $\left( X,A\right) $ to the Hilbert
pro-$C^{\ast }$-bimodule $\left( B,B\right) $.

\textit{The crossed product of }$A$\textit{\ by }a Hilbert pro-$C^{\ast }$%
-bimodule $\left( X,A\right) $\textit{\ }is a pro-$C^{\ast }$-algebra,
denoted by $A\times _{X}\mathbb{Z}$, and a covariant representation $\left(
i_{X},i_{A}\right) $ of $\left( X,A\right) $ on $A\times _{X}\mathbb{Z}$
with the property that for any covariant representation $\left( \varphi
_{X},\varphi _{A}\right) $ of $\left( X,A\right) $ on a pro-$C^{\ast }$%
-algebra $B$, there is a unique pro-$C^{\ast }$-morphism $\Phi :A\times _{X}%
\mathbb{Z}\rightarrow B$ such that $\Phi \circ i_{X}=\varphi _{X}$ and $\Phi
\circ i_{A}=\varphi _{A}$ \cite[Definition 3.3]{JZ}.

\begin{remark}
If $\left( \Phi ,\varphi \right) $ is a morphism of Hilbert pro-$C^{\ast }$%
-bimodules from $\left( X,A\right) $ to $\left( Y,B\right) $, then $\left(
i_{Y}\circ \Phi ,i_{B}\circ \varphi \right) $ is a covariant representation
of $X$ on $B\times _{Y}\mathbb{Z}$ and by the universal property of $A\times
_{X}\mathbb{Z}$ there is a unique pro-$C^{\ast }$-morphism $\Phi \times
\varphi $ from $A\times _{X}\mathbb{Z}$ to $B\times _{Y}\mathbb{Z}$ such
that $\left( \Phi \times \varphi \right) \circ i_{A}=i_{B}\circ \varphi $
and $\left( \Phi \times \varphi \right) \circ i_{X}=i_{Y}\circ \Phi $.
\end{remark}

\begin{lemma}
\label{Conjugate} Let {$\left( \Phi ,\varphi \right) $ be a } a morphism of
Hilbert pro-$C^{\ast }$-bimodules from $\left( X,A\right) $ to $\left(
Y,B\right) $. If $\Gamma $ and $\Gamma ^{\prime }$ have the same index set
and $\varphi =\lim\limits_{\leftarrow \lambda }\varphi _{\lambda }$, then $%
\Phi =\lim\limits_{\leftarrow \lambda }\Phi _{\lambda },$ for each $\lambda
\in \Lambda ,\left( \Phi _{\lambda },\varphi _{\lambda }\right) $ is a
morphism of Hilbert $C^{\ast }$-bimodules, $\left( \Phi _{\lambda }\times
\varphi _{\lambda }\right) _{\lambda }$ is an inverse system of $C^{\ast }$%
-morphisms and $\Phi \times \varphi =\lim\limits_{\leftarrow \lambda }\Phi
_{\lambda }\times \varphi _{\lambda }$. Moreover, if {$\left( \Phi ,\varphi
\right) $ is an isomorphism of }Hilbert pro-$C^{\ast }$-bimodules and $%
\varphi _{\lambda },\lambda \in \Lambda $ are $C^{\ast }$-isomorphisms, then 
$\left( \Phi _{\lambda },\varphi _{\lambda }\right) ,$ $\lambda \in \Lambda $
are isomorphisms of Hilbert $C^{\ast }$-bimodules.
\end{lemma}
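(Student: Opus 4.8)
The plan is to build everything at each $C^{\ast}$-level from the Arens--Michael decompositions $X=\lim_{\leftarrow\lambda}X_{\lambda}$, $A=\lim_{\leftarrow\lambda}A_{\lambda}$, $Y=\lim_{\leftarrow\lambda}Y_{\lambda}$, $B=\lim_{\leftarrow\lambda}B_{\lambda}$, and then to transport the universal property of the crossed product from each level to the pro-$C^{\ast}$ level by means of the uniqueness it supplies.

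\emph{Construction of the $\Phi_{\lambda}$ and verification that $(\Phi_{\lambda},\varphi_{\lambda})$ is a bimodule morphism.} Since $\varphi=\lim_{\leftarrow\lambda}\varphi_{\lambda}$, the $C^{\ast}$-morphism $\varphi_{\lambda}\colon A_{\lambda}\to B_{\lambda}$ satisfies $\pi_{\lambda}^{B}\circ\varphi=\varphi_{\lambda}\circ\pi_{\lambda}^{A}$. Feeding this into the defining relation of a bimodule morphism I get
\[
q_{\lambda}^{B}\!\left(\Phi(x)\right)^{2}=q_{\lambda}\!\left(\left\langle\Phi(x),\Phi(x)\right\rangle_{B}\right)=q_{\lambda}\!\left(\varphi\!\left(\left\langle x,x\right\rangle_{A}\right)\right)=\left\Vert\varphi_{\lambda}\!\left(\pi_{\lambda}^{A}\!\left(\left\langle x,x\right\rangle_{A}\right)\right)\right\Vert_{B_{\lambda}},
\]
so $\sigma_{\lambda}^{X}(x)=0$ forces $\sigma_{\lambda}^{Y}(\Phi(x))=0$, and $\Phi$ descends to a linear map $\Phi_{\lambda}\colon X_{\lambda}\to Y_{\lambda}$ with $\Phi_{\lambda}\circ\sigma_{\lambda}^{X}=\sigma_{\lambda}^{Y}\circ\Phi$. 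Surjectivity of $\sigma_{\lambda}^{X}$ together with $\sigma_{\lambda\mu}^{Y}\circ\sigma_{\lambda}^{Y}=\sigma_{\mu}^{Y}$ and $\sigma_{\lambda\mu}^{X}\circ\sigma_{\lambda}^{X}=\sigma_{\mu}^{X}$ yields $\sigma_{\lambda\mu}^{Y}\circ\Phi_{\lambda}=\Phi_{\mu}\circ\sigma_{\lambda\mu}^{X}$ for $\lambda\ge\mu$, so $(\Phi_{\lambda})_{\lambda}$ is an inverse system of linear maps and $\Phi=\lim_{\leftarrow\lambda}\Phi_{\lambda}$. Using again surjectivity of $\sigma_{\lambda}^{X}$ and the descriptions of the inner products on $X_{\lambda}$ and $Y_{\lambda}$ recalled in Section~2,
\[
\left\langle\Phi_{\lambda}(\sigma_{\lambda}^{X}(x)),\Phi_{\lambda}(\sigma_{\lambda}^{X}(y))\right\rangle_{B_{\lambda}}=\pi_{\lambda}^{B}\!\left(\varphi\!\left(\left\langle x,y\right\rangle_{A}\right)\right)=\varphi_{\lambda}\!\left(\left\langle\sigma_{\lambda}^{X}(x),\sigma_{\lambda}^{X}(y)\right\rangle_{A_{\lambda}}\right),
\]
and the analogous computation for the left inner product gives relation $(4)$; relations $(1)$ and $(2)$ then follow, since $(3)$ implies $(1)$ and $(4)$ implies $(2)$.

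\emph{The crossed products.} By \cite{JZ} one has $A\times_{X}\mathbb{Z}=\lim_{\leftarrow\lambda}\left(A_{\lambda}\times_{X_{\lambda}}\mathbb{Z}\right)$ and $B\times_{Y}\mathbb{Z}=\lim_{\leftarrow\lambda}\left(B_{\lambda}\times_{Y_{\lambda}}\mathbb{Z}\right)$, with connecting $C^{\ast}$-morphisms $\sigma_{\lambda\mu}^{X}\times\pi_{\lambda\mu}^{A}$ and $\sigma_{\lambda\mu}^{Y}\times\pi_{\lambda\mu}^{B}$. Each $(\Phi_{\lambda},\varphi_{\lambda})$ produces, by the universal property at the $C^{\ast}$-level \cite{AEE}, a $C^{\ast}$-morphism $\Phi_{\lambda}\times\varphi_{\lambda}$. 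To see that $(\Phi_{\lambda}\times\varphi_{\lambda})_{\lambda}$ is an inverse system I would compare, for $\lambda\ge\mu$, the two $C^{\ast}$-morphisms $(\sigma_{\lambda\mu}^{Y}\times\pi_{\lambda\mu}^{B})\circ(\Phi_{\lambda}\times\varphi_{\lambda})$ and $(\Phi_{\mu}\times\varphi_{\mu})\circ(\sigma_{\lambda\mu}^{X}\times\pi_{\lambda\mu}^{A})$ on the generators $i_{A_{\lambda}}(a)$ and $i_{X_{\lambda}}(\xi)$, where the required identity reduces exactly to $\pi_{\lambda\mu}^{B}\circ\varphi_{\lambda}=\varphi_{\mu}\circ\pi_{\lambda\mu}^{A}$ and $\sigma_{\lambda\mu}^{Y}\circ\Phi_{\lambda}=\Phi_{\mu}\circ\sigma_{\lambda\mu}^{X}$, already established; since these generators span a dense subalgebra, the two morphisms agree. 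Finally $\lim_{\leftarrow\lambda}(\Phi_{\lambda}\times\varphi_{\lambda})$ and $\Phi\times\varphi$ are both pro-$C^{\ast}$-morphisms $A\times_{X}\mathbb{Z}\to B\times_{Y}\mathbb{Z}$ that agree on $i_{A}(A)$ and $i_{X}(X)$, hence coincide by the uniqueness in the universal property. I expect this step to be the main obstacle, since it hinges on having the crossed products presented precisely as these inverse limits and on matching the connecting maps correctly; everything else is bookkeeping driven by surjectivity of the quotient maps.

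\emph{The isomorphism case.} If, in addition, $\varphi_{\lambda}$ is a $C^{\ast}$-isomorphism, then it is isometric, so
\[
q_{\lambda}^{B}\!\left(\Phi_{\lambda}(\xi)\right)^{2}=\left\Vert\varphi_{\lambda}\!\left(\left\langle\xi,\xi\right\rangle_{A_{\lambda}}\right)\right\Vert=\left\Vert\left\langle\xi,\xi\right\rangle_{A_{\lambda}}\right\Vert=\left\Vert\xi\right\Vert_{X_{\lambda}}^{2},
\]
which shows that $\Phi_{\lambda}$ is isometric, in particular injective; and since $\Phi$ is bijective and $\sigma_{\lambda}^{Y}$ is surjective, $\Phi_{\lambda}(X_{\lambda})=\Phi_{\lambda}(\sigma_{\lambda}^{X}(X))=\sigma_{\lambda}^{Y}(\Phi(X))=Y_{\lambda}$, so $\Phi_{\lambda}$ is surjective as well. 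Hence each $(\Phi_{\lambda},\varphi_{\lambda})$ is an isomorphism of Hilbert $C^{\ast}$-bimodules.
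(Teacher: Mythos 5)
Your proposal is correct and follows essentially the same route as the paper: descend $\Phi$ to maps $\Phi _{\lambda }:X_{\lambda }\rightarrow Y_{\lambda }$ via the inner-product identity $q_{\lambda }^{B}\left( \Phi \left( x\right) \right) ^{2}=q_{\lambda }\left( \varphi \left( \left\langle x,x\right\rangle _{A}\right) \right) $, verify that the $\Phi _{\lambda }\times \varphi _{\lambda }$ form an inverse system by comparing the two composites on the generators $i_{A_{\lambda }}\left( A_{\lambda }\right) $ and $i_{X_{\lambda }}\left( X_{\lambda }\right) $, and identify $\Phi \times \varphi $ with the inverse limit. The only point of divergence is the isomorphism step, where the paper applies the first part to $\left( \Phi ^{-1},\varphi ^{-1}\right) $ and then shows the resulting maps $\psi _{\lambda }$ satisfy $\psi _{\lambda }=\Phi _{\lambda }^{-1}$ using surjectivity of $\sigma _{\lambda }^{X}$ and $\sigma _{\lambda }^{Y}$, whereas you prove directly that each $\Phi _{\lambda }$ is isometric (hence injective) and surjective; both arguments are valid, and yours is slightly more self-contained.
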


\begin{proof}
Let $\lambda \in \Lambda $. From 
\begin{equation*}
q_{\lambda }^{B}\left( \Phi \left( x\right) \right) ^{2}=q_{\lambda }\left(
\varphi \left( \left\langle x,x\right\rangle \right) \right) \leq p_{\lambda
}\left( \left\langle x,x\right\rangle \right) =p_{\lambda }^{A}\left(
x\right) ^{2}
\end{equation*}%
for all $x\in X$, we deduce that there is a linear map $\Phi _{\lambda
}:X_{\lambda }\rightarrow Y_{\lambda }$ such that $\Phi _{\lambda }\circ
\sigma _{\lambda }^{X}=\sigma _{\lambda }^{Y}\circ \Phi $. It is easy to
verify that $\left( \Phi _{\lambda }\right) _{\lambda }$ is an inverse
system of linear maps and $\Phi =\lim\limits_{\leftarrow \lambda }\Phi
_{\lambda }$. Moreover, for each $\lambda \in \Lambda $, $\left( \Phi
_{\lambda },\varphi _{\lambda }\right) $ is a morphism of Hilbert $C^{\ast }$%
-bimodules. Let $\Phi _{\lambda }\times \varphi _{\lambda }$ be the $C^{\ast
}$-morphism from $A_{\lambda }\times _{X_{\lambda }}\mathbb{Z}$ to $%
B_{\lambda }\times _{Y_{\lambda }}\mathbb{Z}$ induced by $\left( \Phi
_{\lambda },\varphi _{\lambda }\right) $. From 
\begin{eqnarray*}
\pi _{\lambda \mu }^{B\times _{Y}\mathbb{Z}}\circ \left( \Phi _{\lambda
}\times \varphi _{\lambda }\right) \circ i_{A_{\lambda }} &=&\pi _{\lambda
\mu }^{B\times _{Y}\mathbb{Z}}\circ i_{B_{\lambda }}\circ \varphi _{\lambda
}=i_{B_{\mu }}\circ \pi _{\lambda \mu }^{B}\circ \varphi _{\lambda } \\
&=&i_{B_{\mu }}\circ \varphi _{\mu }\circ \pi _{\lambda \mu }^{A}=\left(
\Phi _{\mu }\times \varphi _{\mu }\right) \circ \pi _{\lambda \mu }^{A\times
_{X}\mathbb{Z}}\circ i_{A_{\lambda }}
\end{eqnarray*}%
and 
\begin{equation*}
\pi _{\lambda \mu }^{B\times _{Y}\mathbb{Z}}\circ \left( \Phi _{\lambda
}\times \varphi _{\lambda }\right) \circ i_{X_{\lambda }}=\left( \Phi _{\mu
}\times \varphi _{\mu }\right) \circ \pi _{\lambda \mu }^{A\times _{X}%
\mathbb{Z}}\circ i_{X_{\lambda }}
\end{equation*}%
for all $\lambda ,\mu \in \Lambda $ with $\lambda \geq \mu $ and taking into
account $i_{A_{\lambda }}\left( A_{\lambda }\right) $ and $i_{X_{\lambda
}}\left( X_{\lambda }\right) $ generate $A_{\lambda }\times _{X_{\lambda }}%
\mathbb{Z}$, we deduce that $\left( \Phi _{\lambda }\times \varphi _{\lambda
}\right) _{\lambda }$ is an inverse system of $C^{\ast }$-morphisms.
Moreover, since 
\begin{equation*}
\lim\limits_{\leftarrow \lambda }\left( \Phi _{\lambda }\times \varphi
_{\lambda }\right) \circ \lim\limits_{\leftarrow \lambda }i_{A_{\lambda
}}=\lim\limits_{\leftarrow \lambda }\left( \Phi _{\lambda }\times \varphi
_{\lambda }\right) \circ i_{A_{\lambda }}=\lim\limits_{\leftarrow \lambda
}i_{B_{\lambda }}\circ \varphi _{\lambda }=\lim\limits_{\leftarrow \lambda
}i_{B_{\lambda }}\circ \lim\limits_{\leftarrow \lambda }\varphi _{\lambda }
\end{equation*}%
and 
\begin{equation*}
\lim\limits_{\leftarrow \lambda }\left( \Phi _{\lambda }\times \varphi
_{\lambda }\right) \circ \lim\limits_{\leftarrow \lambda }i_{X_{\lambda
}}=\lim\limits_{\leftarrow \lambda }\left( \Phi _{\lambda }\times \varphi
_{\lambda }\right) \circ i_{X_{\lambda }}=\lim\limits_{\leftarrow \lambda
}i_{Y_{\lambda }}\circ \Phi _{\lambda }=\lim\limits_{\leftarrow \lambda
}i_{Y_{\lambda }}\circ \lim\limits_{\leftarrow \lambda }\Phi _{\lambda },
\end{equation*}%
we obtain $\Phi \times \varphi =\lim\limits_{\leftarrow \lambda }\Phi
_{\lambda }\times \varphi _{\lambda }$.

Suppose that {$\left( \Phi ,\varphi \right) $ is an isomorphism of }Hilbert
pro-$C^{\ast }$-bimodules and $\varphi _{\lambda },\lambda \in \Lambda $ are 
$C^{\ast }$-isomorphisms. Then, since $\varphi ^{-1}=\lim\limits_{\leftarrow
\lambda }\varphi _{\lambda }^{-1}$, by the first part of the proof, $\Phi
^{-1}=\lim\limits_{\leftarrow \lambda }\psi _{\lambda }\ $and $\left( \psi
_{\lambda },\varphi _{\lambda }^{-1}\right) $ is a morphism of Hilbert $%
C^{\ast }$-bimodules for all $\lambda \in \Lambda $. Let $\lambda \in
\Lambda $. From%
\begin{equation*}
\psi _{\lambda }\circ \Phi _{\lambda }\circ \sigma _{\lambda }^{X}=\psi
_{\lambda }\circ \sigma _{\lambda }^{Y}\circ \Phi =\sigma _{\lambda
}^{X}\circ \Phi ^{-1}\circ \Phi =\sigma _{\lambda }^{X}
\end{equation*}%
and 
\begin{equation*}
\Phi _{\lambda }\circ \psi _{\lambda }\circ \sigma _{\lambda }^{Y}=\Phi
_{\lambda }\circ \sigma _{\lambda }^{X}\circ \Phi ^{-1}=\sigma _{\lambda
}^{Y}\circ \Phi \circ \Phi ^{-1}=\sigma _{\lambda }^{Y}
\end{equation*}%
and taking into account that $\sigma _{\lambda }^{X}$ and $\sigma _{\lambda
}^{Y}$ are surjective, we deduce that $\psi _{\lambda }=\Phi _{\lambda
}^{-1} $.
\end{proof}

The following proposition gives the relation between the crossed product of $%
A$ by $X$ and the crossed product of $M(A)$ by $M(X)$.

\begin{proposition}
Let $\left( X,A\right) $ be full Hilbert pro-$C^{\ast }$-bimodule. Then $%
A\times _{X}\mathbb{Z}$ can be embedded into $M\mathbb{(}A)\times _{M(X)}%
\mathbb{Z}$.
\end{proposition}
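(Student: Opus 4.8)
The plan is to exhibit the embedding as the pro-$C^{\ast}$-morphism induced by the canonical inclusion of $(X,A)$ into its multiplier bimodule, and then to verify injectivity and preservation of the defining seminorms by reducing to the $C^{\ast}$-level. By Theorem \ref{multiplier}(2) the pair $(\iota_{X},\iota_{A})$ is a nondegenerate morphism of Hilbert pro-$C^{\ast}$-bimodules from $(X,A)$ to $(M(X),M(A))$, so by the functoriality of the crossed product recorded in the Remark preceding Lemma \ref{Conjugate} it induces a pro-$C^{\ast}$-morphism
\[
\iota_{X}\times\iota_{A}:A\times_{X}\mathbb{Z}\rightarrow M(A)\times_{M(X)}\mathbb{Z}
\]
determined by $(\iota_{X}\times\iota_{A})\circ i_{A}=i_{M(A)}\circ\iota_{A}$ and $(\iota_{X}\times\iota_{A})\circ i_{X}=i_{M(X)}\circ\iota_{X}$. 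It then suffices to show this morphism is injective and preserves every defining $C^{\ast}$-seminorm, for then it is a homeomorphism onto a complete, hence closed, pro-$C^{\ast}$-subalgebra.

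Next I would pass to inverse limits. Both bimodules decompose as inverse limits of Hilbert $C^{\ast}$-bimodules over the common index set $\Lambda$: one has $(X,A)=\lim_{\leftarrow\lambda}(X_{\lambda},A_{\lambda})$ and, by Theorem \ref{multiplier}(1) together with Lemma \ref{strict1}, $(M(X),M(A))=\lim_{\leftarrow\lambda}(M(X_{\lambda}),M(A_{\lambda}))$. Invoking the inverse-limit description of the crossed product from \cite{JZ} gives $A\times_{X}\mathbb{Z}=\lim_{\leftarrow\lambda}A_{\lambda}\times_{X_{\lambda}}\mathbb{Z}$ and $M(A)\times_{M(X)}\mathbb{Z}=\lim_{\leftarrow\lambda}M(A_{\lambda})\times_{M(X_{\lambda})}\mathbb{Z}$. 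Since $\iota_{A}=\lim_{\leftarrow\lambda}\iota_{A_{\lambda}}$ and the index sets coincide, Lemma \ref{Conjugate} yields $\iota_{X}\times\iota_{A}=\lim_{\leftarrow\lambda}(\iota_{X_{\lambda}}\times\iota_{A_{\lambda}})$, where $\iota_{X_{\lambda}}\times\iota_{A_{\lambda}}$ is the $C^{\ast}$-morphism induced by the inclusion of $(X_{\lambda},A_{\lambda})$ into $(M(X_{\lambda}),M(A_{\lambda}))$.

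At the $C^{\ast}$-level I would appeal to the Abadie--Eilers--Exel construction \cite{AEE}: for the full Hilbert $C^{\ast}$-bimodule $(X_{\lambda},A_{\lambda})$ the inclusion into its multiplier bimodule embeds $A_{\lambda}\times_{X_{\lambda}}\mathbb{Z}$ as a $C^{\ast}$-subalgebra of $M(A_{\lambda})\times_{M(X_{\lambda})}\mathbb{Z}$, so each $\iota_{X_{\lambda}}\times\iota_{A_{\lambda}}$ is an injective, hence isometric, $\ast$-homomorphism. An inverse limit of injective maps is injective, and since each level map is isometric, $\iota_{X}\times\iota_{A}$ carries the $\lambda$-th $C^{\ast}$-seminorm of $A\times_{X}\mathbb{Z}$ onto the $\lambda$-th $C^{\ast}$-seminorm of its image for every $\lambda$; thus $\iota_{X}\times\iota_{A}$ is a seminorm-preserving injection, its image is complete, and the embedding is established.

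The main obstacle will be confirming that forming the crossed product genuinely commutes with the inverse limits built from the multiplier data, that is, that $M(A)\times_{M(X)}\mathbb{Z}=\lim_{\leftarrow\lambda}M(A_{\lambda})\times_{M(X_{\lambda})}\mathbb{Z}$ with the connecting morphisms $\pi_{\lambda\mu}^{M(A)\times_{M(X)}\mathbb{Z}}$ identified with those induced by $(\chi_{\lambda\mu}^{M(X)},\pi_{\lambda\mu}^{M(A)})$. This rests on the fact, established in the proof of Theorem \ref{multiplier}(1), that $\{M(A_{\lambda});M(X_{\lambda});\pi_{\lambda\mu}^{M(A)};\chi_{\lambda\mu}^{M(X)}\}$ is an inverse system of Hilbert $C^{\ast}$-bimodules, after which the inverse-limit description of \cite{JZ} applies and the identification of the connecting maps follows from Lemma \ref{Conjugate}.
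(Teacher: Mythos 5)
Your proposal is correct and follows essentially the same route as the paper: both pass to the induced morphism $\iota_{X}\times\iota_{A}=\lim\limits_{\leftarrow\lambda}\iota_{X_{\lambda}}\times\iota_{A_{\lambda}}$ via Lemma \ref{Conjugate} and reduce the isometry of each seminorm to the known $C^{\ast}$-level embedding of $A_{\lambda}\times_{X_{\lambda}}\mathbb{Z}$ into $M(A_{\lambda})\times_{M(X_{\lambda})}\mathbb{Z}$. The only difference is bibliographic: the paper cites \cite[Remark 2.2]{A} for that $C^{\ast}$-level fact rather than \cite{AEE}.
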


\begin{proof}
Let $\iota _{A}$ be the embedding of $A$ in $M(A)$ and $\iota _{X}$ the
embedding of $X$ in $M(X)$. Then $\left( \iota _{X},\iota _{A}\right) $ is a
morphism of Hilbert pro-$C^{\ast }$-bimodules, and since $\iota
_{A}=\lim\limits_{\leftarrow \lambda }\iota _{A_{\lambda }}$, by Lemma \ref%
{Conjugate}, $\iota _{X}\times \iota _{A}=$ $\lim\limits_{\leftarrow \lambda
}\iota _{X_{\lambda }}\times \iota _{A_{\lambda }}$ is a pro-$C^{\ast }$%
-morphism from $A\times _{X}\mathbb{Z}$ to $M\mathbb{(}A)\times _{M(X)}%
\mathbb{Z}$. Moreover, since 
\begin{eqnarray*}
p_{\lambda ,M\mathbb{(}A)\times _{M(X)}\mathbb{Z}}\left( \iota _{X}\times
\iota _{A}\left( c\right) \right) &=&\left\Vert \iota _{X_{\lambda }}\times
\iota _{A_{\lambda }}\left( \pi _{\lambda }^{A\times _{X}\mathbb{Z}}\left(
c\right) \right) \right\Vert _{M\mathbb{(}A_{\lambda })\times _{M(X_{\lambda
})}\mathbb{Z}} \\
&&\text{\cite[Remark 2.2]{A}} \\
&=&\left\Vert \pi _{\lambda }^{A\times _{X}\mathbb{Z}}\left( c\right)
\right\Vert _{A_{\lambda }\times _{X_{\lambda }}\mathbb{Z}}=p_{\lambda
,A\times _{X}\mathbb{Z}}\left( c\right)
\end{eqnarray*}%
for all $c\in A\times _{X}\mathbb{Z}$ and for all $\lambda \in \Lambda $, $%
A\times _{X}\mathbb{Z}$ can be identified with a pro-$C^{\ast }$-subalgebra
of $M\mathbb{(}A)\times _{M(X)}\mathbb{Z}$.
\end{proof}

The following proposition is a generalization of \cite[Proposition 4.7]{R}.

\begin{proposition}
Let $\left( X,A\right) $ be a full Hilbert pro-$C^{\ast }$-bimodule. Then $%
M(A)\times _{M(X)}\mathbb{Z}$ can be identified with a pro-$C^{\ast }$%
-subalgebra of $M(A\times _{X}\mathbb{Z})$.
\end{proposition}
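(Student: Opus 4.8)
The plan is to reduce everything to the $C^{\ast }$-level via the Arens--Michael decomposition and then reassemble by an inverse limit, feeding the $C^{\ast }$-algebraic statement \cite[Proposition 4.7]{R} at each index $\lambda $. First I would record the three inverse limit decompositions that make this possible. By \cite[Remark 2.2]{A} one has $\left( A\times _{X}\mathbb{Z}\right) _{\lambda }=A_{\lambda }\times _{X_{\lambda }}\mathbb{Z}$ for each $\lambda \in \Lambda $, so that $A\times _{X}\mathbb{Z}=\lim\limits_{\leftarrow \lambda }\left( A_{\lambda }\times _{X_{\lambda }}\mathbb{Z}\right) $ and consequently $M(A\times _{X}\mathbb{Z})=\lim\limits_{\leftarrow \lambda }M(A_{\lambda }\times _{X_{\lambda }}\mathbb{Z})$. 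On the other hand, Theorem \ref{multiplier} gives $\left( M(X),M(A)\right) =\lim\limits_{\leftarrow \lambda }\left( M(X_{\lambda }),M(A_{\lambda })\right) $, and since the crossed product commutes with inverse limits of Hilbert bimodules \cite{JZ}, this yields $M(A)\times _{M(X)}\mathbb{Z}=\lim\limits_{\leftarrow \lambda }\left( M(A_{\lambda })\times _{M(X_{\lambda })}\mathbb{Z}\right) $, with $\left( M(A)\times _{M(X)}\mathbb{Z}\right) _{\lambda }=M(A_{\lambda })\times _{M(X_{\lambda })}\mathbb{Z}$.

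Next I would construct the morphism globally rather than level by level, so that compatibility with the connecting maps comes for free. The covariant representation $\left( i_{X},i_{A}\right) $ of $\left( X,A\right) $ on $A\times _{X}\mathbb{Z}$, followed by the inclusion of $A\times _{X}\mathbb{Z}$ into its multiplier algebra, is a morphism of Hilbert pro-$C^{\ast }$-bimodules from $\left( X,A\right) $ to $\left( M(A\times _{X}\mathbb{Z}),M(A\times _{X}\mathbb{Z})\right) $. Using fullness of $\left( X,A\right) $ one checks it is nondegenerate: $i_{A}$ is nondegenerate because, for an approximate unit $\left\{ e_{i}\right\} _{i}$ of $A$, the net $\left\{ i_{A}\left( e_{i}\right) \right\} _{i}$ acts as an approximate unit on the generators $i_{A}\left( A\right) $ and $i_{X}\left( X\right) $ of $A\times _{X}\mathbb{Z}$, while $\left[ i_{X}\left( X\right) \left( A\times _{X}\mathbb{Z}\right) \right] =A\times _{X}\mathbb{Z}$ follows from the covariance relation $i_{X}\left( x\right) i_{X}\left( y\right) ^{\ast }=i_{A}\left( _{A}\left\langle x,y\right\rangle \right) $ together with $\overline{_{A}\left\langle X,X\right\rangle }=A$. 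Theorem \ref{strict} then extends $\left( i_{X},i_{A}\right) $ to a nondegenerate morphism $\left( \overline{i_{X}},\overline{i_{A}}\right) :\left( M(X),M(A)\right) \rightarrow \left( M(A\times _{X}\mathbb{Z}),M(A\times _{X}\mathbb{Z})\right) $, that is, to a covariant representation of $\left( M(X),M(A)\right) $ on $M(A\times _{X}\mathbb{Z})$. By the universal property of $M(A)\times _{M(X)}\mathbb{Z}$ this produces a pro-$C^{\ast }$-morphism $\Psi :M(A)\times _{M(X)}\mathbb{Z}\rightarrow M(A\times _{X}\mathbb{Z})$ with $\Psi \circ i_{M(A)}=\overline{i_{A}}$ and $\Psi \circ i_{M(X)}=\overline{i_{X}}$.

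It then remains to prove that $\Psi $ is an isometric embedding, and here I would pass back to the inverse limit. Being a pro-$C^{\ast }$-morphism between the inverse limits recorded above, $\Psi $ decomposes as $\Psi =\lim\limits_{\leftarrow \lambda }\Psi _{\lambda }$ with $\Psi _{\lambda }:M(A_{\lambda })\times _{M(X_{\lambda })}\mathbb{Z}\rightarrow M(A_{\lambda }\times _{X_{\lambda }}\mathbb{Z})$. The key point is to identify $\Psi _{\lambda }$ with the map of \cite[Proposition 4.7]{R}: $\Psi _{\lambda }$ is the morphism induced by the level-$\lambda $ covariant representation $\left( \overline{i_{X_{\lambda }}},\overline{i_{A_{\lambda }}}\right) $ of $\left( M(X_{\lambda }),M(A_{\lambda })\right) $ on $M(A_{\lambda }\times _{X_{\lambda }}\mathbb{Z})$, which is exactly the data to which \cite[Proposition 4.7]{R} applies, so by uniqueness in the universal property the two coincide and $\Psi _{\lambda }$ is isometric. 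Consequently $p_{\lambda ,M(A\times _{X}\mathbb{Z})}\left( \Psi \left( d\right) \right) =\left\Vert \Psi _{\lambda }\left( d_{\lambda }\right) \right\Vert =\left\Vert d_{\lambda }\right\Vert =p_{\lambda ,M(A)\times _{M(X)}\mathbb{Z}}\left( d\right) $ for every $d$ and every $\lambda $, whence $\Psi $ is isometric and $M(A)\times _{M(X)}\mathbb{Z}$ is identified with a pro-$C^{\ast }$-subalgebra of $M(A\times _{X}\mathbb{Z})$.

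The main obstacle I expect is the bookkeeping at the interface of the two constructions: verifying that the $\lambda $-component of the globally defined extension $\left( \overline{i_{X}},\overline{i_{A}}\right) $ really is the nondegenerate $C^{\ast }$-level extension $\left( \overline{i_{X_{\lambda }}},\overline{i_{A_{\lambda }}}\right) $ used in \cite[Proposition 4.7]{R}. This amounts to tracing the level structure through the proof of Theorem \ref{strict} (where $\overline{\Phi _{\delta }}=\overline{\Phi _{\left( \lambda ,\delta \right) }}\circ \chi _{\lambda }^{M(X)}$) and through the identification $\left( A\times _{X}\mathbb{Z}\right) _{\lambda }=A_{\lambda }\times _{X_{\lambda }}\mathbb{Z}$; the nondegeneracy check that feeds Theorem \ref{strict} is the only place where fullness of $\left( X,A\right) $ is genuinely used.
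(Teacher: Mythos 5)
Your proposal is correct and follows essentially the same route as the paper: fullness gives nondegeneracy of $(i_{X},i_{A})$, Theorem \ref{strict} extends it to a covariant representation $(\overline{i_{X}},\overline{i_{A}})$ of $(M(X),M(A))$ on $M(A\times _{X}\mathbb{Z})$, and the injectivity/isometry is obtained from \cite[Proposition 4.7]{R} at each level $\lambda $ and reassembled by inverse limits. The only (cosmetic) difference is the order of construction — you build the global morphism $\Psi $ first via the universal property and then decompose it, whereas the paper assembles $\Phi =\lim\limits_{\leftarrow \lambda }\Phi _{\lambda }$ directly from the levelwise maps, which sidesteps the small extra check that $\Psi $ is index-preserving so that it decomposes with matching indices.
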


\begin{proof}
Since $X$ is full, $\left( i_{X},i_{A}\right) $ is nondegenerate and $%
i_{A}=\lim\limits_{\leftarrow \lambda }i_{A_{\lambda }}$ and $i_{X}=$ $%
\lim\limits_{\leftarrow \lambda }i_{X_{\lambda }}$ \cite[Propositions 3.4
and 3.5]{JZ}. Then, by Theorem \ref{strict}, $\left( i_{X},i_{A}\right) $
extends to a covariant representation $\left( \overline{i_{X}},\overline{%
i_{A}}\right) $ of $\left( M(X),M(A)\right) $ on $M(A\times _{X}\mathbb{Z})$%
, and moreover, $\overline{i_{A}}=\lim\limits_{\leftarrow \lambda }\overline{%
i_{A_{\lambda }}}$ \ and $\overline{i_{X}}=$ $\lim\limits_{\leftarrow
\lambda }\overline{i_{X_{\lambda }}}$. It is easy to check $\left( \overline{%
i_{X_{\lambda }}},\overline{i_{A_{\lambda }}}\right) $ is a covariant
representation of $\left( M(X_{\lambda }),M(A_{\lambda })\right) $ on $%
M(A_{\lambda }\times _{X_{\lambda }}\mathbb{Z})$ for each $\lambda \in
\Lambda $. By \cite[Proposition 4.7]{R}, for each $\lambda \in \Lambda $,
there is an injective $C^{\ast }$-morphism $\Phi _{\lambda }:M(A_{\lambda
})\times _{M(X_{\lambda })}\mathbb{Z}$ $\rightarrow M(A_{\lambda }\times
_{X_{\lambda }}\mathbb{Z})$ such that $\Phi _{\lambda }\circ i_{M(X_{\lambda
})}=\overline{i_{X_{\lambda }}}$ and $\Phi _{\lambda }\circ i_{M(A_{\lambda
})}=\overline{i_{A_{\lambda }}}\ $. From%
\begin{eqnarray*}
\pi _{\lambda \mu }^{M(A\times _{X}\mathbb{Z})}\circ \Phi _{\lambda }\circ
i_{M(X_{\lambda })} &=&\pi _{\lambda \mu }^{M(A\times _{X}\mathbb{Z})}\circ 
\overline{i_{X_{\lambda }}}=\overline{i_{X_{\mu }}}\circ \chi _{\lambda \mu
}^{M(X)} \\
&=&\Phi _{\mu }\circ i_{M(X_{\mu })}\circ \chi _{\lambda \mu }^{M(X)}=\Phi
_{\mu }\circ \pi _{\lambda \mu }^{M(A\times _{X}\mathbb{Z})}\circ
i_{M(X_{\lambda })}
\end{eqnarray*}%
and 
\begin{eqnarray*}
\pi _{\lambda \mu }^{M(A\times _{X}\mathbb{Z})}\circ \Phi _{\lambda }\circ
i_{M(A_{\lambda })} &=&\pi _{\lambda \mu }^{M(A\times _{X}\mathbb{Z})}\circ 
\overline{i_{A_{\lambda }}}=\overline{i_{A_{\mu }}}\circ \pi _{\lambda \mu
}^{M(A)} \\
&=&\Phi _{\mu }\circ i_{M(A_{\mu })}\circ \pi _{\lambda \mu }^{M(A)}=\Phi
_{\mu }\circ \pi _{\lambda \mu }^{M(A\times _{X}\mathbb{Z})}\circ
i_{M(A_{\lambda })}
\end{eqnarray*}%
for all $\lambda ,\mu \in \Lambda $, with $\lambda \geq \mu $, and taking
into account that $i_{M(X_{\lambda })}\left( M(X_{\lambda })\right) $ and $%
i_{M(A_{\lambda })}\left( M(A_{\lambda })\right) $ generate\ $M(A_{\lambda
}) $ $\times _{M(X_{\lambda })}\mathbb{Z}$, we deduce that $\left( \Phi
_{\lambda }\right) _{\lambda }$ is an inverse system of isometric $C^{\ast }$%
-morphisms, and then $\Phi =$ $\lim\limits_{\leftarrow \lambda }\Phi
_{\lambda }$ is an injective pro-$C^{\ast }$-morphism from $%
\lim\limits_{\leftarrow \lambda }M(A_{\lambda })$\ $\times _{M(X_{\lambda })}%
\mathbb{Z}$ to $\lim\limits_{\leftarrow \lambda }M(A_{\lambda }\times
_{X_{\lambda }}\mathbb{Z})$ such that $p_{\lambda ,M(A\times _{X}\mathbb{Z}%
)}\left( \Phi \left( c\right) \right) =p_{\lambda ,M(A)\times _{M(X)}\mathbb{%
Z}}\left( c\right) $ for all $c\in M(A)\times _{M(X)}\mathbb{Z}$ and for all 
$\lambda \in \Lambda $. Therefore, $M(A)\times _{M(X)}\mathbb{Z}$ can be
identified with a pro-$C^{\ast }$-subalgebra of $M(A\times _{X}\mathbb{Z})$.
\end{proof}

An automorphism $\alpha $ of a pro-$C^{\ast }$-algebra $A$ such that $%
p_{\lambda }(\alpha (a))=p_{\lambda }(a)$ for all $a\in A$ and $\lambda \in
\Lambda ^{^{\prime }}$, where $\Lambda ^{^{\prime }}$ is a cofinal subset of 
$\Lambda $, is called an inverse limit automorphism. If $\alpha $ is an
inverse limit automorphism of the pro-$C^{\ast }$-algebra $A$, then $%
X_{\alpha }=\{\xi _{x};x\in A\}$ is a Hilbert $A-A$ pro-$C^{\ast }$-bimodule
with the bimodule structure defined as $\xi _{x}a=\xi _{xa}$, respectively $%
a\xi _{x}=\xi _{\alpha ^{-1}\left( a\right) x}$, and the inner products are
defined as $\left\langle \xi _{x},\xi _{y}\right\rangle _{A}=x^{\ast }y$,
respectively $_{A}\left\langle \xi _{x},\xi _{y}\right\rangle =\alpha \left(
xy^{\ast }\right) $. The crossed product $A\times _{\alpha }\mathbb{Z}$ of $%
A $ by $\alpha $ is isomorphic to the crossed product of $A$ by $X_{\alpha
}\ $\cite{JZ}.

\begin{corollary}
If $\alpha $ is an inverse limit automorphism of a non unital pro-$C^{\ast }$%
-algebra $A$, then $M(A)\times _{\overline{\alpha }}\mathbb{Z}$ can be
identified with a pro-$C^{\ast }$-subalgebra of $M(A\times _{\alpha }\mathbb{%
Z})$.
\end{corollary}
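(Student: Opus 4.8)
The plan is to reduce the corollary to the preceding proposition by identifying the multiplier bimodule $M(X_{\alpha})$ with the bimodule $X_{\overline{\alpha}}$ attached to the extended automorphism. First I would recall that, by the result of \cite{JZ} quoted above, the crossed product $A\times _{\alpha }\mathbb{Z}$ is isomorphic to $A\times _{X_{\alpha }}\mathbb{Z}$, where $X_{\alpha }=\{\xi _{x};x\in A\}$ carries the stated bimodule structure, and likewise $M(A)\times _{\overline{\alpha }}\mathbb{Z}\cong M(A)\times _{X_{\overline{\alpha }}}\mathbb{Z}$. Before anything else I would check that $X_{\alpha }$ is a full Hilbert pro-$C^{\ast }$-bimodule: since $A$ has an approximate unit, the closed pro-$C^{\ast }$-subalgebra generated by $\{\langle \xi _{x},\xi _{y}\rangle _{A}=x^{\ast }y\}$ is all of $A$, and because $\alpha $ is an automorphism the elements $_{A}\langle \xi _{x},\xi _{y}\rangle =\alpha (xy^{\ast })$ generate $A$ as well, so $X_{\alpha }$ is full on both sides. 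Consequently the previous proposition applies and embeds $M(A)\times _{M(X_{\alpha })}\mathbb{Z}$ as a pro-$C^{\ast }$-subalgebra of $M(A\times _{X_{\alpha }}\mathbb{Z})\cong M(A\times _{\alpha }\mathbb{Z})$.

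The heart of the argument is then to produce an isomorphism of Hilbert $M(A)-M(A)$ pro-$C^{\ast }$-bimodules $(X_{\overline{\alpha }},M(A))\cong (M(X_{\alpha }),M(A))$ whose algebra component is the identity on $M(A)$. For each $m\in M(A)$ I would define $T_{m}:A\rightarrow X_{\alpha }$ by $T_{m}(a)=\xi _{ma}$; this is adjointable with $T_{m}^{\ast }(\xi _{y})=m^{\ast }y$, so $m\mapsto T_{m}$ maps $M(A)$ into $M(X_{\alpha })=L_{A}(A,X_{\alpha })$, and it is bijective because $X_{\alpha }\cong A$ as a right Hilbert $A$-module, whence $L_{A}(A,X_{\alpha })\cong L_{A}(A)\cong M(A)$. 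A direct computation then shows that the assignment $\xi _{m}\mapsto T_{m}$ intertwines all four pieces of structure: the identities $T_{m}\cdot m^{\prime }=T_{mm^{\prime }}$ and $\langle T_{m},T_{n}\rangle _{M(A)}=m^{\ast }n$ match the right structure of $X_{\overline{\alpha }}$, while $m^{\prime }\cdot T_{m}=T_{\overline{\alpha }^{-1}(m^{\prime })m}$ and $_{M(A)}\langle T_{m},T_{n}\rangle =\overline{\alpha }(mn^{\ast })$ match the left structure, the latter two using precisely the extension $\overline{\Phi _{A}}(m)(\xi _{x})=\xi _{\overline{\alpha }^{-1}(m)x}$ of the left action to the multiplier level.

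Having this bimodule isomorphism, I would invoke Lemma \ref{Conjugate}: since its algebra component is $\mathrm{id}_{M(A)}=\lim\limits_{\leftarrow \lambda }\mathrm{id}_{M(A_{\lambda })}$ and each $\mathrm{id}_{M(A_{\lambda })}$ is a $C^{\ast }$-isomorphism, the induced map of crossed products is a pro-$C^{\ast }$-isomorphism, giving $M(A)\times _{\overline{\alpha }}\mathbb{Z}\cong M(A)\times _{X_{\overline{\alpha }}}\mathbb{Z}\cong M(A)\times _{M(X_{\alpha })}\mathbb{Z}$. Composing with the embedding supplied by the previous proposition then yields the desired identification of $M(A)\times _{\overline{\alpha }}\mathbb{Z}$ with a pro-$C^{\ast }$-subalgebra of $M(A\times _{\alpha }\mathbb{Z})$.

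I expect the main obstacle to be the verification in the second paragraph that the left inner product and the left action transport correctly, since these are exactly where the extension $\overline{\alpha }$ enters and where one must be careful that $\overline{\Phi _{A}}$ is the correct extension of the left multiplication $\Phi _{A}(a)(\xi _{x})=\xi _{\alpha ^{-1}(a)x}$ to $M(A)$; everything else is a routine transcription through the inverse-limit decomposition and an appeal to the full-bimodule hypothesis established at the outset.
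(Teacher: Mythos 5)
Your argument is correct and follows exactly the route the paper intends for this corollary (which it leaves unproved): identify $A\times _{\alpha }\mathbb{Z}$ with $A\times _{X_{\alpha }}\mathbb{Z}$, verify $X_{\alpha }$ is full, establish the bimodule isomorphism $(M(X_{\alpha }),M(A))\cong (X_{\overline{\alpha }},M(A))$ via $m\mapsto T_{m}$, and then apply the preceding proposition. Your explicit verification of the left-module structure and left inner product, where $\overline{\Phi _{A}}$ and $\overline{\alpha }$ enter, supplies precisely the detail the paper omits.
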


\textbf{Acknowledgements. }The first author was partially supported by the
grants of the Romanian Ministry of Education, CNCS - UEFISCDI, project
number PN-II-RU-PD-2012-3-0533 and project number PN-II-ID-PCE-2012-4-0201.
The second author was supported by a grant of the Romanian Ministry of
Education, CNCS - UEFISCDI, project number PN-II-RU-PD-2012-3-0533.

\end{document}